\def\vir{\mathrm{vir}}
\def\re{{\rm{re}}}
\def\im{{\rm{im}}}
\def\Deo{\dot\De}
\def\Deop{\Deo_+}
\def\Deom{\Deo_-}
\def\qt{\mat{\mathbb T}}
\def\lvc{\wtl\lM_\cC}
\begin{document}
\title[Motivic DT invariants and McKay correspondence]{Motivic Donaldson-Thomas invariants and McKay correspondence}%
\author{Sergey Mozgovoy}%
\email{mozgovoy@maths.ox.ac.uk}%
\begin{abstract}
Let $G\sb\SL_2(\cC)\sb\SL_3(\cC)$ be a finite group. We compute motivic Pandharipande-Thomas and Donaldson-Thomas invariants of the crepant resolution $\Hilb^G(\cC^3)$ of $\cC^3/G$ generalizing results of Gholampour and Jiang who computed numerical DT/PT invariants using localization techniques. Our formulas rely on the computation of motivic Donaldson-Thomas invariants for a special class of quivers with potentials. We show that these motivic Donaldson-Thomas invariants are closely related to the polynomials counting absolutely indecomposable quiver representations over finite fields introduced by Kac. We formulate a conjecture on the positivity of Donaldson-Thomas invariants for a broad class of quivers with potentials. This conjecture, if true, implies the Kac positivity conjecture for arbitrary quivers.
\end{abstract}

\maketitle

\section{Introduction}
The goal of this paper is to compute motivic Pandharipande-Thomas and\br Donaldson-Thomas invariants of the crepant resolution $Y=\Hilb^G(\cC^3)$ of $\cC^3/G$ for any finite subgroup $G\sb\SL_2(\cC)\sb\SL_3(\cC)$. This is achieved by using the idea of Nagao and Nakajima \cite{nagao_counting} who realized PT and DT moduli spaces on $Y$ as moduli spaces of stable framed representations of a certain quiver with potential for particular choices of stability parameters.

The quiver $\what Q$ considered above is the McKay quiver of $(G,\cC^3)$. It can be constructed as follows \cite[Figure 1]{gholampour_counting}. One starts with a quiver $Q$ of affine type, considers its double quiver $\ub Q$ and then adds loops $l_i:i\to i$ at every vertex $i\in Q_0$. Potential on $\what Q$ mentioned above is given by
$$W=\sum_{(a:i\to j)\in Q_1}(aa^*l_j-a^*al_i),$$
where $(a^*:j\to i)\in\ub Q_1$ is an arrow dual to $(a:i\to j)\in Q_1$.
This construction works of course for an arbitrary quiver $Q$. Now we can ask, what are the motivic Donaldson-Thomas invariants of $(\what Q,W)$. The answer is given by the following result (see Theorem \ref{u motive}).

\begin{thr}
\label{main}
Let $Q$ be an arbitrary quiver and let $(\what Q,W)$ be constructed as above.
For any $\al\in\cN^{Q_0}$ let $a_\al(q)$ be the polynomial counting absolutely indecomposable representations of $Q$ with dimension vector \al over finite fields \cite{kac_root}. 
Then the universal motivic DT series of the Jacobian algebra $J_{\what Q,W}$ of $(\what Q,W)$ is given by
$$
\sum_{\al\in\cN^{Q_0}}[\gM(J_{\what Q,W},\al)]_\vir y^\al
=\Exp\left(\frac{\sum_{\al\in\cN^{Q_0}}a_\al(\cL) y^\al}{1-\cL\inv}\right).$$
\end{thr}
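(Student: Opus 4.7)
The first step is to identify modules over $J_{\what Q,W}$. Because $W$ is linear in the loops $l_i$, the Jacobian relation $\dd W/\dd l_i=0$ reproduces the preprojective relation at vertex $i$, while $\dd W/\dd a=0$ and $\dd W/\dd a^*=0$ force the family $(l_i)$ to commute with the arrows of $\ub Q$. Thus a $J_{\what Q,W}$-module of dimension vector $\al$ is the same as a pair $(M,f)$ with $M\in\Rep(\Pi(Q),\al)$ and $f\in\End_{\Pi(Q)}(M)$, where $\Pi(Q)$ denotes the preprojective algebra of $Q$. This structural description is the bridge between the motivic DT theory of $(\what Q,W)$ and Kac's count of absolutely indecomposable representations of $Q$.

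The plan is to proceed by dimensional reduction followed by a Hausel-type identity. Since $W$ is linear in the $l_i$, the motivic vanishing cycle push-forward lemma (Behrend-Bryan-Szendr\H{o}i and Morrison-Mozgovoy-Nagao-Szendr\H{o}i, and in much wider generality Davison) says that $[\gM(J_{\what Q,W},\al)]_\vir$ equals, up to an explicit power of $\cL$, the motive of the stack of pairs $(M,f)$ above. Summing over $f$ produces a factor $\cL^{\dim\End_{\Pi(Q)}(M)}$, so one is reduced to understanding the generating series of $[\Rep(\Pi(Q),\al)]/[G_\al]$. At this point I would invoke the motivic form of Hausel's identity (due to Mozgovoy, building on Hausel-Letellier-Rodriguez Villegas) which gives
$$\sum_\al\frac{[\Rep(\Pi(Q),\al)]}{[G_\al]}y^\al=\Exp\left(\frac{\sum_\al a_\al(\cL)y^\al}{1-\cL^{-1}}\right)$$
in the appropriate completed motivic ring, directly producing the plethystic exponential of the theorem.

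The main obstacle is the careful bookkeeping of the various $\cL$-twists: the virtual normalization built into $[\gM]_\vir$, the shift introduced by dimensional reduction, the stacky quotient by $G_\al$, and the $\End$-contribution from integrating out $f$ all produce powers of $\cL$ (some half-integer), and one must verify that they cancel so that the final answer is exactly the plethystic exponential above with no residual twist. The key point that makes this cancellation work is the symmetric form of $\what Q$: the arrows of $Q$, of $\ub Q$, and the loops $l_i$ are balanced in just the right way for the half-integer shifts to disappear. A secondary subtlety is that Hausel's identity is most naturally phrased for nilpotent representations over a finite field, so upgrading it to a motivic equality covering all $\Pi(Q)$-modules requires either a stratification argument or direct appeal to a pre-existing motivic version.
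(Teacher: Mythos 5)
Your structural identification of $J_{\what Q,W}$-modules as pairs $(M,f)$ with $M$ a $\Pi_Q$-module and $f\in\End_{\Pi_Q}(M)$ is correct and is indeed the right starting point, matching the spirit of the paper's first dimensional reduction. However, the step ``Summing over $f$ produces a factor $\cL^{\dim\End_{\Pi_Q}(M)}$'' misstates what the dimensional reduction lemma actually gives. The lemma (the paper's Proposition \ref{prp:reduction}) yields the \emph{uniform} factor $\cL^{d_I(\al)}=\cL^{\sum_i\al_i^2}$, i.e.\ the dimension of the full ambient space of loop maps, not the dimension of the subspace of compatible ones. The mechanism is that the linear function $w(v,\cdot)$ on the space of loops contributes $\cL^{\dim W}-\cL^{\dim W}=0$ on the locus where $\phi(v)\ne 0$ and $\cL^{\dim W}$ on the locus where $\phi(v)=0$; it does not see $\dim\End(M)$. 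Consequently, your conclusion ``one is reduced to $[\Rep(\Pi_Q,\al)]/[G_\al]$'' does not follow from the step as you have written it: $\sum_{[M]}\cL^{\dim\End(M)}/[\Aut(M)]$ is not $[R(\Pi_Q,\al)]/[G_\al]$. The conclusion happens to be true, but only because of the uniform-power version. After that one also needs the exact exponent, which is where the symmetry enters concretely: $\hi_{\what Q}(\al,\al)+2d_I(\al)=2\hi_Q(\al,\al)$, giving $[\gM(J_{\what Q,W},\al)]_\vir=\cL^{\hi_Q(\al,\al)}\,[R(\Pi_Q,\al)]/[G_\al]$.

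The second half of your argument is a citation rather than a proof. The identity
$$\sum_\al\cL^{\hi_Q(\al,\al)}\frac{[R(\Pi_Q,\al)]}{[\GL_\al]}y^\al=\Exp\left(\frac{\sum_\al a_\al(\cL)y^\al}{1-\cL\inv}\right)$$
is exactly what the paper proves in the remainder of the argument, via what it calls the second dimension reduction: the forgetful map $R(\Pi_Q,\al)\to R(Q,\al)$ has fiber $\Ext^1_Q(M,M)^*$ over $M$ (Crawley-Boevey), then one decomposes $M$ into indecomposables, uses Theorem \ref{aut} to write $[\Aut M]=[\End M]\prod_X(\cL\inv)_{m(X)}$, and finally uses the motivic power structure together with Heine's identity \eqref{eq:heine} to package the sum as a plethystic exponential. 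If you cannot point to a reference that gives this precise motivic stack-level statement for all (not nilpotent) $\Pi_Q$-representations, you must supply this chain of steps; the paper does not cite it as known. Your worry about ``nilpotent representations'' is also slightly misplaced: the Kac polynomial $a_\al$ here counts absolutely indecomposable $Q$-representations, and the identity used is over all $\Pi_Q$-modules via the $\Ext^1$-fibration, not a nilpotent variant.
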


Positivity conjecture of Kac \cite{kac_root} states that the polynomials $a_\al$ have non-negative integer coefficients. In view of the above theorem this is equivalent to the statement that the motivic DT invariants of $(\what Q,W)$ are polynomials with non-negative integer coefficients. We generalize this statement in Conjecture \ref{conj:nonneg} for a broad class of quivers with potentials. For quivers with the trivial potential this was conjectured by Kontsevich and Soibelman \cite{kontsevich_cohomological} and proved by Efimov \cite{efimov_cohomological}. In the case of a conifold our conjecture can be verified using the explicit formula \cite[Theorem 2.1]{morrison_motivica}. The Kac positivity conjecture is still open, although some progress was done in \cite{crawley-boevey_absolutely,mozgovoy_motivic}. The above theorem provides the geometric meaning of the polynomials $a_\al$. Earlier this was done only for indivisible $\al\in\cN^{Q_0}$ \cite{crawley-boevey_absolutely}. One can hope that the above theorem together with Conjecture \ref{conj:nonneg} will give a natural way to prove Kac positivity conjecture.

Now let us go back to the crepant resolution $Y=\Hilb^{G}(\cC^3)$ of $\cC^3/G$. The corresponding quiver $Q$ is of affine type in this case and we can easily compute the polynomials $a_\al$. Namely, if \al is a real root then $a_\al(q)=1$ and if \al is an imaginary root then $a_\al(q)=q+l$, where $l$ is the number of isomorphism classes of non-trivial irreducible representations of $G$. Using Theorem \ref{main}, we obtain an explicit formula for the universal motivic DT series.

Following \cite{nagao_counting}, for any stability parameter $\ze\in\cR^{Q_0}$, we can define the moduli spaces $\gM_\ze(J'_{\what Q,W},\al)$ of framed \ze-semistable $J_{\what Q,W}$-modules. The generating function
$$\lZ_\ze=\sum_{\al\in\cN^{Q_0}}[\gM_\ze(J'_{\what Q,W},\al)]_\vir y^\al$$
of their virtual motives can be determined from the universal motivic DT series of $J_{\what Q,W}$ using results from \cite{mozgovoy_wall-crossing} (see Corollary \ref{crl:factorization}).

\begin{thr}
For any generic stability parameter $\ze\in\cR^{Q_0}$ we have
$$\lZ_\ze=\prod_{\ze\cdot\al<0}\lZ_\al$$
where
\begin{equation*}
\lZ_\al(-y_0,y_1,\dots)=\case{
\prod_{j=1}^{\al_0}(1-\cL^{j-\frac{\al_0}2}y^\al)\inv
&\al\in\De_+^\re
\\
\prod_{j=1}^{\al_0}(1-\cL^{j+1-\frac{\al_0}2}y^\al)\inv(1-\cL^{j-\frac{\al_0}2}y^\al)^{-l}
&\al\in\De_+^\im
\\
1&\text{otherwise}}
\end{equation*}
\end{thr}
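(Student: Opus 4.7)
The plan is to combine Theorem \ref{main} with Corollary \ref{crl:factorization}, which applies the wall-crossing formula of \cite{mozgovoy_wall-crossing} to the present framed setup. The corollary directly yields the factorization $\lZ_\ze = \prod_{\ze\cdot\al<0}\lZ_\al$, so the task reduces to identifying each factor $\lZ_\al$ in closed form from the universal motivic DT series of $J_{\what Q,W}$.

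By Theorem \ref{main}, the motivic DT invariant of $(\what Q,W)$ in direction $\al$ is precisely the Kac polynomial $a_\al(\cL)$, so the $\al$-ray of the universal series is $\Exp\bigl(a_\al(\cL)y^\al/(1-\cL\inv)\bigr)$. The framing at vertex $0$ with multiplicity $\al_0$ converts this infinite plethystic product, via the standard framed-to-unframed comparison in Corollary \ref{crl:factorization}, into a finite product of length $\al_0$; the $\cL^{1/2}$-symmetrization of the virtual motive produces a half-integer shift $-\al_0/2$ in the exponent of $\cL$, and the framing arrow contributes the sign twist $y_0\mapsto -y_0$. The net outcome is that $\lZ_\al(-y_0,y_1,\ldots) = \prod_{j=1}^{\al_0}(1-\cL^{j-\al_0/2}y^\al)^{-c}$, where $c$ runs over the weights of $a_\al(\cL)$ expanded as a Laurent polynomial in $\cL$.

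It then remains to evaluate $a_\al$ on the affine McKay quiver $Q$ attached to $G\sb\SL_2(\cC)$. Since $Q$ is of affine ADE type, the standard calculation recalled in the paragraph preceding the theorem gives $a_\al=0$ outside $\De_+$, $a_\al=1$ for $\al\in\De_+^\re$, and $a_\al(q)=q+l$ for $\al\in\De_+^\im$, where $l$ is the number of non-trivial irreducible $G$-representations. The case $\al\notin\De_+$ contributes the trivial factor $1$. Substituting $a_\al=1$ produces the single geometric factor of the real-root case. In the imaginary case the decomposition $a_\al(\cL)=\cL+l$ splits into one weight $c=1$ with an extra $\cL$-shift (yielding $\prod_j(1-\cL^{j+1-\al_0/2}y^\al)\inv$) and one weight $c=l$ with no shift (yielding $\prod_j(1-\cL^{j-\al_0/2}y^\al)^{-l}$), reproducing exactly the imaginary-root formula.

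The hardest step is the bookkeeping in the second paragraph: one must verify that the framing arrow at vertex $0$ truncates the plethystic product to length $\al_0$ with the stated half-integer shift, and that the virtual-motive normalization of $\gM_\ze(J'_{\what Q,W},\al)$ in the definition of $\lZ_\ze$ matches the normalization of the universal series used in Theorem \ref{main}. Once these conventions are aligned, the term-by-term matching is immediate.
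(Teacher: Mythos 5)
Your proposal is correct and follows essentially the same route as the paper: the factorization and the individual factors come from the framed wall-crossing formula of Corollary \ref{crl:Z factoriz} applied with $w=(1,0,\dots,0)$, combined with $\Om_\al=a_\al(\cL)$ from Theorem \ref{u motive} and the affine-type values $a_\al=1$ for real roots, $a_\al(\cL)=\cL+l$ for imaginary roots, and $0$ otherwise. The only point to phrase more carefully is that the length-$\al_0$ products arise not from a framing of ``multiplicity $\al_0$'' (the framing multiplicity is $1$) but from $\frac{\cL^{w\cdot\al}-1}{1-\cL\inv}=\cL+\dots+\cL^{\al_0}$ with $w\cdot\al=\al_0$, together with $\Exp(\cL^jy^\al)=(1-\cL^jy^\al)\inv$ and the twist $S_{-w}\colon y^\al\mto(-\cL^{\oh})^{-\al_0}y^\al$, which is exactly the computation carried out in the paper's proof.
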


Applying this result to the stability parameters corresponding to the DT/PT moduli spaces on $Y$ we obtain (see Corollary \ref{crl:PT/DT})

\begin{crl}
We have
\begin{align*}
\lZ_{PT}(Y,-s,Q)
&=\sum_{\be,n}[P_n(Y,\be)]_\vir s^nQ^\be
=\prod_{n\ge1}\prod_{j=1}^n\prod_{\al\in\Deo_+}(1-\cL^{j-\frac n2}s^nQ^\al)\inv,\\
\lZ_{DT}(Y,-s,Q)
&=\sum_{\be,n}[I_n(Y,\be)]_\vir s^nQ^\be\\
&=\lZ_{PT}(Y,-s,Q)\cdot\prod_{n\ge1}\prod_{j=1}^{n}(1-\cL^{j+1-\frac{n}2}s^n)\inv(1-\cL^{j-\frac{n}2}s^n)^{-l}.
\end{align*}
\end{crl}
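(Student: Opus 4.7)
The plan is to combine the Nagao--Nakajima identification of $PT$ and $DT$ moduli on $Y$ with framed quiver representations for $(\what Q, W)$, feed the preceding theorem into the factorization formula $\lZ_\ze = \prod_{\ze\cdot\al<0}\lZ_\al$, and use the affine-type Kac polynomials recalled just above ($a_{\al}(q)=1$ for a real root and $a_{n\delta}(q)=q+l$ for an imaginary root).

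First, by \cite{nagao_counting} there are stability parameters $\ze_{PT},\ze_{DT}\in\cR^{Q_0}$ such that, under the identification $n=\al_0$ and $\be_i=\al_i-n\delta_i$ (length and curve class), the framed moduli $\gM_{\ze_{PT}}(J'_{\what Q,W},\al)$ and $\gM_{\ze_{DT}}(J'_{\what Q,W},\al)$ recover $P_n(Y,\be)$ and $I_n(Y,\be)$ as virtual motives. After the change of variables $y_0=-s\prod_{i\ne 0}Q_i^{-\delta_i}$ and $y_i=Q_i$, the series $\lZ_{\ze_{PT}}$ and $\lZ_{\ze_{DT}}$ become $\lZ_{PT}(Y,-s,Q)$ and $\lZ_{DT}(Y,-s,Q)$. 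Note that this substitution sends $y^{\al'+n\delta}\mapsto(-s)^nQ^{\al'}$ for any $\al'\in\Deo$, and in particular $y^{n\delta}\mapsto(-s)^n$, so imaginary roots contribute factors depending only on $s$.

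Next I would describe, again following \cite{nagao_counting}, the set $\{\al:\ze\cdot\al<0\}$ in each chamber: the $PT$ chamber selects precisely the real positive roots of the form $\al'+n\delta$ with $\al'\in\Deo_+$ and $n\ge 1$, whereas the $DT$ chamber selects these together with all positive imaginary roots $n\delta$, $n\ge 1$. Since $\al_0=n$ in every such case, the real-root factor from the previous theorem becomes $\prod_{j=1}^n(1-\cL^{j-n/2}s^nQ^{\al'})\inv$, giving the stated product for $\lZ_{PT}$; and the imaginary-root factor becomes $\prod_{j=1}^n(1-\cL^{j+1-n/2}s^n)\inv(1-\cL^{j-n/2}s^n)^{-l}$, whose product over $n\ge 1$ is precisely the ratio $\lZ_{DT}/\lZ_{PT}$ in the corollary; the exponent $-l$ is exactly the contribution of $a_{n\delta}(q)=q+l$.

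The main obstacle is the geometric input: verifying (or importing from \cite{nagao_counting}) the precise description of the $PT$ and $DT$ chambers and the corresponding sets of negative dimension vectors, together with the compatibility of the Nagao--Nakajima moduli identification at the level of virtual motives (not merely as schemes). Once these facts are in hand, together with Corollary \ref{crl:factorization}, the rest of the argument is a direct substitution into the formulas of the preceding theorem using the explicit affine-type values of the $a_\al$.
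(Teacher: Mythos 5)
Your route is the paper's own: identify $P_n(Y,\be)$ and $I_n(Y,\be)$ with framed $\ze^{\pm}$-stable $J'$-moduli via Nagao--Nakajima, feed the affine Kac polynomials ($a_\al=1$ for real roots, $a_{n\de}=q+l$ for imaginary roots) into Corollary \ref{crl:factorization}, and change variables. However, two of your intermediate assertions carry the wrong sign relative to what the geometry forces, and they happen to compensate. Riemann--Roch (Lemma \ref{lmm:RR}) gives $\al_i=n\rk\lP_i-c_1(\lP_i)\cdot\be$, i.e.\ $\be=n\de-\al$ in the basis of exceptional curves, not $\be=\al-n\de$; correspondingly the paper's substitution is $s=y^\de$, $Q^\be=y^{-\be}$. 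And with $\ze^{\pm}=(-r\pm\eps,1,\dots,1)$ one computes $\ze^{\pm}\cdot(\ga+n\de)=\pm n\eps+\sum_{i\ne0}\ga_i$, so the PT chamber is $\{\al\in\De_+ : \ze^{+}\cdot\al<0\}=\De_+^{\re,-}=\{n\de+\ga : \ga\in\Deom,\ n\ge1\}$, equivalently dimension vectors $n\de-\al'$ with $\al'\in\Deop$, not $n\de+\al'$ with $\al'\in\Deop$; the DT chamber adds $\De_+^{\im}$. Since you flipped both the curve-class dictionary and the chamber description, your monomial bookkeeping still lands on the stated products, but each claim as written is false, and either one combined with the correct version of the other would yield $Q^{-\al}$ in place of $Q^{\al}$.

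Also, the two items you defer as the ``main obstacle'' are not obstacles in the paper's setup. The chamber description, together with the genericity needed to invoke Corollary \ref{crl:factorization} (namely $\ze^{\pm}\cdot\al\ne0$ for all $\al\in\De_+$), is a one-line computation from the explicit $\ze^{\pm}$ (the paper's Lemma preceding Corollary \ref{crl:PT/DT}); it is not something to import from \cite{nagao_counting}. And the compatibility at the level of virtual motives is definitional here: the motivic series \eqref{eq:gDT} and \eqref{eq:PT} are defined through the framed quiver moduli using Proposition \ref{NN equiv}. With the signs fixed and these points made explicit, your argument coincides with the paper's proof of Corollaries \ref{crl:PT/DT} and \ref{expl DT/PT}.
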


A closely related result for motivic non-commutative DT invariants is discussed in Remark \ref{rmr:NCDT formula}. The specialization of these results at $\cL^\oh=1$ gives the numerical DT/PT/NCDT invariants of $Y$ which were obtained earlier by Gholampour and Jiang \cite[Theorem 1.2 and Theorem 1.7]{gholampour_counting} using localization technique.  For abelian $G$ numerical NCDT invariants were computed by Young \cite{young_generating} using combinatorial methods and DT/PT/NCDT invariants were computed by Nagao \cite{nagao_derived} using wall-crossing formulas. While writing this paper I was informed by Andrew Morrison that he obtained a similar result for the motivic NCDT invariants in the case of abelian $G$.

The paper is organized as follows. In section \ref{prelim} we recall the notions of motivic rings, \la-rings, moduli spaces of quiver representation, virtual motives, quantum tori, and wall-crossing formulas. In Theorem \ref{aut} we will prove a useful result that allows to compute the motive of the automorphism group of an object in a Krull-Schmidt category.
In Section \ref{sec:perverse} we will recall the technique of Nagao and Nakajima \cite{nagao_counting} that allows to describe the moduli spaces of PT/DT invariants on a small crepant resolution of a singular affine $3$-Calabi-Yau variety in terms of the moduli spaces of representations of its non-commutative crepant resolution. Similar description for the case of McKay quivers can be found in \cite{gholampour_counting}. We will also discuss the correspondence between the topological invariants of coherent sheaves on the crepant resolution and dimension vectors of representations of the non-commutative crepant resolution. In section \ref{sec:mckay} we will see how McKay correspondence provides an example of a general framework discussed in Section \ref{sec:perverse}. In Section \ref{sec:loop} we will compute universal motivic DT series of the quiver with potential $(\what Q,W)$ for an arbitrary quiver $Q$. Here we also formulate the positivity conjecture for the motivic DT invariants of quivers with potentials. In Section \ref{sec:applications} we compute motivic DT/PT invariants in the McKay situation.

I would like to thank Tamas Hausel, Andrew Hubery, Kentaro Nagao, Markus Reineke, and Bal\'azs Szendr\H oi for many helpful discussions. The author's research was supported by EPSRC grant EP/G027110/1.

\section{Preliminaries}
\label{prelim}
\subsection{Ring of motives}
Let $K_0(\CM_\cC)$ be the Grothendieck ring of the category of Chow motives over \cC with rational coefficients. It is known that $K_0(\CM_\cC)$ is a (special) \la-ring \cite{getzler_mixed,heinloth_note} with \si-operations defined by $\si_n([X])=[X^n/S_n]$ for any smooth projective variety $X$. Let $\cL=[\cA^1]\in K_0(\CM_\cC)$ be the Lefschetz motive. The ring $\lM_\cC=K_0(\CM_\cC)[\cL^{-\oh}]$ also has a \la-ring structure with \si-operations extended by $\si_n(\cL^\oh)=\cL^{\frac n2}$.
We define the ring $\lvc$ to be the dimensional completion of $\lM_\cC$ with respect to $\cL\inv$ (see \cite{behrend_motivica,mozgovoy_wall-crossing}). The \la-ring structure on $\lM_\cC$ can be extended to $\lvc$. The elements $1-\cL^n$ and $[\GL_n]$ are invertible in~$\lvc$. The last statement follows from the fact that
\begin{equation}
[\GL_n]=\prod_{k=0}^{n-1}(\cL^n-\cL^k)
=\cL^{n^2}\prod_{k=1}^n(1-\cL^{-k})=\cL^{n^2}(\cL\inv)_n,
\label{eq:GL_n}
\end{equation}
where $(q)_n=(q;q)_n=\prod_{k=1}^n(1-q^k)$ are the $q$-Pochhammer symbols.

The map sending a smooth projective variety $X$ to its $E$-polynomial
$$E(X,u,v)=\sum_{p,q\ge0}(-1)^{p+q}\dim H^{p,q}(X,\cC)u^pv^q$$
can be extended to the \la-ring homomorphism
$$E:\lvc\to\cQ[u,v]\pser{(uv)^{-\oh}}$$
with $E(\cL^\oh)=(uv)^\oh$. It can be specialized to the Poincar\'e polynomial $$P:\lvc\to\cQ\lser{y^{-1}},\qquad P(X,y)=E(X,y,y).$$
There exists also the Euler number specialization $e:\lM_\cC\to\cQ$, $e(X)=E(X,1,1)$, which is a \la-ring homomorphism. Note that we can not extend $e$ to $\lvc$, as for example the image of $\sum_{n\ge0}\cL^{-n}$ would not converge.

\subsection{Motive of the group of automorphisms}
An additive category is called a Krull-Schmidt category if any of its objects can be decomposed into a finite direct sum of indecomposable objects and endomorphism rings of indecomposable objects are local. By the Krull-Schmidt theorem a decomposition of an object from such category into a direct sum of indecomposable objects is unique up to a permutation of direct summands.
Let $k$ be a field and let \lA be an additive $k$-linear category with finite-dimensional $\Hom$-spaces and with splitting idempotents (if \lA is abelian then all its idempotents automatically split). 
Then \lA is a Krull-Schmidt category (it is called a Krull-Schmidt $k$-category).

\begin{thr}
\label{aut}
Let \lA be a Krull-Schmidt \cC-category. Given an object $X\in\lA$, let $X=\oplus_{i\in I}X_i^{n_i}$ be its decomposition into the sum of indecomposable objects. Then the motive of the group $\Aut(X)$ is
$$[\Aut(X)]=[\End(X)]\cdot{\prod_{i\in I}(\cL\inv)_{n_i}}.$$
\end{thr}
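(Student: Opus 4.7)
The plan is to identify $\Aut(X)$ as the unit group of the finite-dimensional $\cC$-algebra $R := \End(X)$, and then reduce to the analogous formula for $\GL_n$ via the Jacobson radical quotient.

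First, I would identify $R/\rad R$ explicitly. Because $\lA$ is a Krull-Schmidt $\cC$-category with finite-dimensional $\Hom$-spaces, each $\End(X_i)$ is a local finite-dimensional $\cC$-algebra whose residue field is a finite-dimensional division algebra over the algebraically closed field $\cC$, hence equal to $\cC$ itself. Writing $\End(X_i) = \cC \cdot \id \oplus \gm_i$ with $\gm_i = \rad \End(X_i)$, and observing that for $i \neq j$ every morphism $X_i \to X_j$ composed with one $X_j \to X_i$ is a non-isomorphism of an indecomposable, hence lies in $\gm_i$, one concludes that the off-diagonal $\Hom$-blocks contribute to the radical. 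This yields a natural algebra isomorphism
$$R/\rad R \iso \prod_{i\in I} M_{n_i}(\cC).$$

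Next, $\Aut(X) = R^\times$ is the Zariski open subvariety of $R$ of invertible elements, and an element of $R$ is a unit iff its image in $R/\rad R$ is a unit. Thus there is a surjection of algebraic groups
$$R^\times \twoheadrightarrow (R/\rad R)^\times = \prod_{i\in I} \GL_{n_i}(\cC)$$
whose kernel is $1 + \rad R$. The scheme-theoretic fiber over any point is a torsor under the unipotent group $1 + \rad R$, which as a variety is just the affine space $\rad R$; hence the map is a Zariski-locally trivial affine bundle of relative dimension $\dim_\cC \rad R$. Taking motives,
$$[\Aut(X)] = \cL^{\dim \rad R} \cdot \prod_{i\in I} [\GL_{n_i}].$$

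Finally, I would plug in the formula $[\GL_{n_i}] = \cL^{n_i^2}(\cL\inv)_{n_i}$ from \eqref{eq:GL_n}. Since $\dim R = \dim \rad R + \sum_i n_i^2$ and $[\End(X)] = \cL^{\dim R}$, the powers of $\cL$ collapse to give
$$[\Aut(X)] = \cL^{\dim R} \prod_{i\in I}(\cL\inv)_{n_i} = [\End(X)] \cdot \prod_{i \in I}(\cL\inv)_{n_i},$$
as required. I do not expect any genuine obstacle; the only point demanding care is the verification that $R/\rad R$ has the claimed block-diagonal form and that the quotient map $R^\times \to (R/\rad R)^\times$ is an affine-bundle projection (a consequence of the fact that $1 + \rad R$ is a connected unipotent group acting freely and that $R \to R/\rad R$ has an algebra section over any affine chart, e.g.\ from the Wedderburn--Malcev decomposition of the finite-dimensional $\cC$-algebra $R$).
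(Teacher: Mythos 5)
Your proof is correct and follows essentially the same route as the paper: pass to the quotient of $\End(X)$ by its radical (the paper phrases this via the radical $\lR_\lA$ of the category, which evaluated at $(X,X)$ is exactly $\rad\End(X)$), identify the quotient with $\prod_{i\in I}M_{n_i}(\cC)$, use that an endomorphism is invertible iff its image there is, and conclude $[\Aut(X)]=\cL^{\dim\rad\End(X)}\prod_i[\GL_{n_i}]$ before invoking $[\GL_n]=\cL^{n^2}(\cL\inv)_n$. The only difference is that you spell out details the paper leaves implicit (residue fields being $\cC$ and the affine-bundle structure of $R^\times\to(R/\rad R)^\times$).
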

\begin{proof}
Let $\lR_\lA$ be the radical of the category \lA \cite[Section 3.2]{gabriel_representations}. If $X,Y\in\lA$ are non-isomorphic indecomposable objects then $\lR_\lA(X,Y)=\lA(X,Y)$. If $X\in\lA$ is indecomposable then $\lR_\lA(X,X)\iso J(\lA(X,X))$, the Jacobson radical of the local ring $\End(X)=\lA(X,X)$.
Using the decomposition $X=\oplus_{i\in I}X_i^{n_i}$ we can write
$$\lA(X,X)/\lR_\lA(X,X)=\prod_{i\in I}\End(\cC^{n_i}).$$
An element in $\lA(X,X)$ is invertible if and only if it is invertible in $(\lA/\lR_\lA)(X,X)$. This implies
$$[\Aut(X)]
=[\lR_\lA(X,X)]\prod_{i\in I}[\GL_{n_i}]
=[\End(X)]\prod_{i\in I}\frac{[\GL_{n_i}]}{\cL^{n_i^2}}.$$
We use now the formula $[\GL_n]=\cL^{n^2}(\cL\inv)_n$ from \eqref{eq:GL_n}.
\end{proof}

\subsection{\texorpdfstring{\la}{lambda}-rings and power structures}
For basic definitions and constructions related to \la-rings see \eg \cite{getzler_mixed,mozgovoy_computational}. For simplicity we will assume that all our \la-rings are algebras over \cQ and therefore the \la-ring structure is uniquely determined by Adams operations.
Given a \la-ring $R$, we can endow the ring $\hat R=R\pser{y_1,\dots,y_m}$ with a \la-ring structure by defining the Adams operations
$$\psi_n(ry^\al)=\psi_n(r)y^{n\al},\qquad r\in R,\al\in\cN^{m}.$$


Let $\hat R_+\sb\hat R$ be an ideal generated by $y_1,\dots,y_m$. 
Define a map $\Exp:\hat R_+\to1+\hat R_+$ by \cite{getzler_mixed}
$$\Exp(f)=\sum_{n\ge0}\si_n(f)=\exp\bigg(\sum_{n\ge1}\frac1n\psi_n(f)\bigg).$$
It is proved in \cite[Prop.~2.2]{getzler_mixed} (see also \cite[Cor.~21]{mozgovoy_computational}) that $\Exp$ has inverse $\Log:1+\hat R_+\to \hat R_+$
$$\Log(f)=\sum_{n\ge1}\frac{\mu(n)}{n}\psi_n\log(f),$$
where $\mu$ is the M\"obius function.

For example, let $R=\cQ\lser q$ be endowed with a \la-ring structure by $\psi_n(f(q))=f(q^n)$. Then, applying the q-binomial theorem (see \eg Heine \cite[Eq.74]{heine_untersuchungen}), we obtain in the ring 
$R\pser x$
\begin{equation}
\sum_{n\ge1}\frac{x^n}{(q)_n}=\prod_{k\ge0}\frac{1}{1-xq^k}=\prod_{k\ge0}\Exp(xq^k)=\Exp\Big(\frac{x}{1-q}\Big),
\label{eq:heine}
\end{equation}
where, as before, $(q)_n=(q;q)_n=\prod_{k=0}^{n-1}(1-q^k)$.

Following \cite{mozgovoy_computational}, we define a power structure map
$$\Pow:(1+\hat R_+)\xx \hat R\to1+\hat R_+,\qquad (f,g)\mto \Exp(g\Log(f)).$$
This map has an interesting geometric description if $R=\wtl\lM_\cC$ is a ring of motives~\cite{gusein-zade_powerb}.
Let 
$$f=\sum_{\al\in \cN^m}f_\al=1+\sum_{\al>0}[A_\al]y^\al,$$
where $A_\al$ are algebraic varieties, and let $X$ be an algebraic variety. Let $A=\coprod_{\al>0}A_\al$ and let $\deg:A\to \cN^m$ be given by $A_\al\ni x\mto\al$.
According to \cite{gusein-zade_powerb,gusein-zade_powera}, the coefficient of $y^\be$ in $\Pow(f,[X])$ is given by the motive of the configuration space of pairs $(K,\vi)$, where 
\begin{enumerate}
	\item $K$ is a finite subset of $X$,
	\item $\vi:K\to A$ is a map such that $\sum_{x\in K}\deg\vi(x)=\be$.
\end{enumerate}
The geometric description of the above space of pairs is the following.
Define the type of a pair $(K,\vi)$ to be the map $k:\cN^m\to\cN$ given by
$$k(\al)=\#\sets{x\in K}{\deg\vi(x)=\al}.$$
The pair $(K,\vi)$ satisfies the condition $\sum_{x\in K}\deg\vi(x)=\be$ if and only if 
\begin{equation}
\sum_{\al\in \cN^m}k(\al)\al=\be.
\label{eq:geom deg}
\end{equation}
There is just a finite number of maps $k:\cN^m\to\cN$ satisfying this condition. The space of pairs $(K,\vi)$ of type $k$ can be parametrized by \cite{gusein-zade_powerb,gusein-zade_powera}
\begin{equation}
\Big(F_{\n k}X\xx\prod_{\al\in \cN^m}A_\al^{k(\al)}\Big)/\prod_{\al\in \cN^m} S_{k(\al)},
\label{eq:1}
\end{equation}
where $\n k=\sum_{\al\in \cN^m}k(\al)$, the configuration space $F_nX$ is given by
$$F_nX=\sets{(x_1,\dots,x_n)\in X^n}{x_i\ne x_j\text{ for }i\ne j},$$
and the product of symmetric groups $\prod_{\al\in \cN^m} S_{k(\al)}$ acts on both factors of \eqref{eq:1} in the obvious way. 
Applying \eqref{eq:geom deg} and \eqref{eq:1} we obtain
\begin{equation}
\Pow(f,[X])=
\sum_{k:\cN^m\to\cN}
\bigg[\Big(F_{\n k}X\xx\prod_{\al\in \cN^m}A_\al^{k(\al)}\Big)/\prod_{\al\in \cN^m} S_{k(\al)}\bigg]y^{\sum k(\al)\al}.
\label{eq:pow geom}
\end{equation}

Let us give a different parametrization of pairs $(K,\vi)$. With any pair $(K,\vi)$ we can associate a map $\psi:X\to \cN^m$ with finite support (\ie $\psi\inv(\cN^m\ms\set0)$ is finite) given by
$$\psi(x)=\case{\deg\vi(x)&x\in K,\\0&x\not\in K.}$$
The pairs $(K,\vi)$ corresponding to the given map $\psi:X\to \cN^m$ are parametrized by
$\prod_{x\in \cN^m}A_{\psi(x)}$.
This means that we can write the coefficient of $y^\be$ in $\Pow(f,[X])$ as
$$\sum_{\over{\psi:X\to \cN^m}{\sum_{}\psi(x)=\be}}\prod_{x\in X}[A_{\psi(x)}]$$
which should be interpreted using the parametrization in \eqref{eq:1}. We can write now
\begin{equation}
\Pow(f,[X])=\sum_{\psi:X\to \cN^m}\prod_{x\in X}f_{\psi(x)},
\label{power}
\end{equation}
where the sum runs over all maps $\psi:X\to \cN^m$ with finite support. This formula can be used for arbitrary $f_\al=[A_\al]y^\al$ (with $[A_\al]$ an arbitrary motive and not necessarily a motive of an algebraic variety). For this we just have to interpret \eqref{eq:1} appropriately.

\subsection{Quivers, moduli stacks, virtual motives}
\subsubsection{Quivers with potentials and their representations}
Let $(Q,W)$ be a quiver with a potential. Let $J=J_{Q,W}=\cC Q/(\dd W)$ be the corresponding Jacobian algebra.

Given a $Q$-representation $M$, we define its dimension vector
$$\udim M=(\dim M_i)_{i\in Q_0}\in\cN^{Q_0}.$$
For any $\al\in\cN^{Q_0}$, we define the space of $Q$-representations with dimension vector \al to be
$$R(Q,\al)=\bop_{(a:i\to j)\in Q_1}\Hom(\cC^{\al_i},\cC^{\al_j}).$$
Let $R(J,\al)$ be the subset of $R(Q,\al)$ consisting of $Q$-representations that satisfy the relations of $J$.
The group $G_\al=\prod_{i\in Q_0}\GL_{\al_i}(\cC)$ acts on $R(Q,\al)$ and $R(J,\al)$.

Define the Euler-Ringel form $\hi_Q$ by
$$\hi_Q(\al,\be)=\sum_{i\in Q_0}\al_i\be_i-\sum_{(a:i\to j)\in Q_1}\al_i\be_j,\qquad \al,\be\in\cZ^{Q_0}.$$
Then $\dim R(Q,\al)-\dim G_\al=-\hi_Q(\al,\al)$. Define the skew-symmetric form
$$\ang{\al,\be}=\hi_Q(\al,\be)-\hi_Q(\be,\al),\qquad \al,\be\in\cZ^{Q_0}.$$

For any $Q$-representation $M$, let $w(M)\in\cC$ be obtained by taking the trace of the linear map on $M$ associated to $W$. This defines a $G_\al$-invariant map
$w_\al:R(Q,\al)\to\cC$. It is known that the set of critical points of $w_\al$ coincides with $R(J,\al)$.

Given $\ze\in\cR^{Q_0}$, called a stability parameter, we define the slope function $\mu_\ze:\cN^{Q_0}\ms\set0\to\cR$ by the rule
$$\mu_\ze(\al)=\frac{\ze\cdot\al}{\n\al},$$
where $\n\al=\sum_{i\in Q_0}\al_i$. For any nonzero $Q$-representation $M$, we define $\mu_\ze(M)=\mu_\ze(\udim M)$. A $Q$-representation $M$ is called \ze-semistable (resp.\ \ze-stable) if for any proper nonzero submodule $N\sb M$ we have $\mu_\ze(N)\le\mu_\ze(M)$ (resp.\ $\mu_\ze(N)<\mu_\ze(M)$). In the same way we define the notion of \ze-(semi)stability for $J$-modules.

\subsubsection{Moduli stacks and their motives}
We define the stacks of $Q$-representations and $J$-modules with dimension vector \al to be
\begin{equation}
\gM(Q,\al)=[R(Q,\al)/G_\al],\qquad \gM(J,\al)=[R(J,\al)/G_\al].
\label{eq:p01}
\end{equation}

Let $R_\ze(Q,\al)$ (resp.\ $R_\ze(J,\al)$) be the open subset of $R(Q,\al)$ (resp.\ $R(J,\al)$) consisting of \ze-semistable $Q$-representations (resp.\ $J$-modules). We define the moduli stacks
\begin{equation}
\gM_\ze(Q,\al)=[R_\ze(Q,\al)/G_\al],\qquad \gM_\ze(J,\al)=[R_\ze(J,\al)/G_\al].
\label{eq:p02}
\end{equation}

\begin{rmr}
\label{rem:cut}
For technical reasons we always assume that there exists a cut of $(Q,W)$. This is a subset $I\sb Q_1$ such that $W$ is homogeneous of degree $1$ with respect to the weight function $\wt:Q_1\to\cN$ defined by
$$\wt(a)=\case{1&a\in I,\\0&a\in Q_1\ms I.}$$
Note that such weight function defines an action of $\cC^*$ on $R(Q,\al)$, $\al\in\cN^{Q_0}$. We have $w_\al(tM)=tw_\al(M)$ for any $t\in\cC^*$, $M\in R(Q,\al)$.
\end{rmr}

The map $w_\al:R(Q,\al)\to\cC$ restricts to the map $w_{\ze,\al}:R_\ze(Q,\al)\to\cC$ and its critical locus is $R_\ze(J,\al)$. In order to define the virtual motive \cite[Def.1.13]{behrend_motivic} of $\gM_\ze(J,\al)$ (or $R_\ze(J,\al)$) one uses the motivic vanishing cycle of $w_{\ze,\al}$. According to \cite[Prop.1.10]{behrend_motivic} the vanishing cycle is greatly simplified if there exists an appropriate torus action on $R_\ze(Q,\al)$.
Such action exists in our situation (see Remark \ref{rem:cut}) and therefore, following \cite{mozgovoy_motivica,nagao_wall-crossing}, we define the virtual motive
\begin{equation}
[\gM_\ze(J,\al)]_\vir=(-\cL^\oh)^{\hi(\al,\al)}
\frac{[w_{\ze,\al}\inv(0)]-[w_{\ze,\al}\inv(1)]}{[G_\al]}.
\label{eq:p03}
\end{equation}
Taking the trivial stability $\ze=0$, we get the virtual motive 
\begin{equation}
[\gM(J,\al)]_\vir=(-\cL^\oh)^{\hi(\al,\al)}
\frac{[w_{\al}\inv(0)]-[w_{\al}\inv(1)]}{[G_\al]}.
\label{eq:p04}
\end{equation}

There is an easy way to compute $[\gM(J,\al)]_\vir$.
Let $I\sb Q_1$ be the cut of $(Q,W)$ and let $Q_I$ be the new quiver defined by $Q_I=(Q_0,Q_1\ms I)$. Define the algebra
$$J_{W,I}=\cC Q_I/(\dd_aW,a\in I).$$
The following result was proved in \cite{morrison_motivic,nagao_wall-crossing}.

\begin{prp}[First dimensional reduction]
\label{prp:reduction}
For any $\al\in\cN^{Q_0}$ we have
$$[w_\al\inv(0)]-[w_\al\inv(1)]=\cL^{d_I(\al)}[R(J_{W,I},\al)],$$
where $d_I(\al)=\sum_{(a:i\to j)\in I}\al_i\al_j$. In particular
$$[\gM(J,\al)]_\vir=(-\cL^\oh)^{\hi(\al,\al)+2d_I(\al)}\frac{[R(J_{W,I},\al)]}{[G_\al]}.$$
\end{prp}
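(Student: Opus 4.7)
The plan is to use the cut $I$ to decompose $R(Q,\al)\iso R(Q_I,\al)\xx R_I(\al)$, where $R_I(\al)=\bop_{(a:i\to j)\in I}\Hom(\cC^{\al_i},\cC^{\al_j})$ has dimension $d_I(\al)$, and to exploit the fact that the homogeneity condition on the cut makes $w_\al$ linear along the fibers of the projection $\pi\colon R(Q,\al)\to R(Q_I,\al)$. Writing $W=\sum_{a\in I}a\cdot W_a$ with $W_a=\dd_a W$ a path in $Q_I$, one obtains
$$w_\al(x,y)=\sum_{a\in I}\tr\bigl(W_a(x)y_a\bigr),\qquad x\in R(Q_I,\al),\ y\in R_I(\al),$$
so $L_x:=w_\al(x,\cdot)$ is a linear form on $R_I(\al)$ whose vanishing as an element of $R_I(\al)\dual$ is equivalent to $W_a(x)=0$ for every $a\in I$, i.e.\ to $x\in R(J_{W,I},\al)$.

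I would then stratify $R(Q_I,\al)=Z\sqcup U$ with $Z=R(J_{W,I},\al)$ and $U$ its complement, and compute $[w_\al\inv(c)]$ stratum by stratum for $c=0,1$. Over $Z$ the form $L_x$ vanishes identically, so $\pi\inv(Z)\cap w_\al\inv(0)=Z\xx R_I(\al)$ with motive $\cL^{d_I(\al)}[Z]$, while $\pi\inv(Z)\cap w_\al\inv(1)=\emptyset$. Over $U$ the form $L_x$ is nonzero on each fiber, so the two intersections are parallel affine hyperplane bundles over $U$, and the main claim is that they have equal motive, hence cancel in the difference.

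The cleanest way to obtain this cancellation is via the $\cC^*$-action from Remark \ref{rem:cut}, which scales $y$ with weight $1$ so that $w_\al$ is homogeneous of degree one. This makes $\pi\inv(U)\cap w_\al\inv(\cC^*)\to\cC^*$ a trivial $\cC^*$-bundle over $w_\al\inv(1)\cap\pi\inv(U)$ via the map $M\mto (M/w_\al(M),w_\al(M))$, giving
$$[\pi\inv(U)\cap w_\al\inv(\cC^*)]=(\cL-1)[\pi\inv(U)\cap w_\al\inv(1)].$$
Combined with $[\pi\inv(U)]=[U]\cL^{d_I(\al)}=[\pi\inv(U)\cap w_\al\inv(0)]+[\pi\inv(U)\cap w_\al\inv(\cC^*)]$ and a separate identification of $[\pi\inv(U)\cap w_\al\inv(1)]=[U]\cL^{d_I(\al)-1}$ (Zariski-locally on $U$ one entry of some $W_a(x)$ is invertible, yielding a linear section of $L$), both hyperplane bundle motives equal $[U]\cL^{d_I(\al)-1}$ and cancel. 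Summing the two strata gives $[w_\al\inv(0)]-[w_\al\inv(1)]=\cL^{d_I(\al)}[R(J_{W,I},\al)]$, and the virtual motive formula then follows by substitution into \eqref{eq:p04}.

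The main obstacle is the bookkeeping on $U$: verifying that the hyperplane bundles $w_\al\inv(c)\cap\pi\inv(U)\to U$ are Zariski-locally trivial (so that their motives actually equal $[U]\cL^{d_I(\al)-1}$), and that the $\cC^*$-trivialization above is genuinely an isomorphism of varieties rather than a bijection of points. Both reduce to finding Zariski-local sections of the surjection $L\colon \pi\inv(U)\to\cC$, which exists because $U$ is covered by opens on which some matrix entry of the tuple $(W_a(x))_{a\in I}$ is nowhere vanishing, allowing one to solve for a coordinate of $y$ linearly.
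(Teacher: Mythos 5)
Your proof is correct, and it is essentially the standard dimensional-reduction argument: the paper does not prove Proposition \ref{prp:reduction} itself but quotes it from \cite{morrison_motivic,nagao_wall-crossing}, where the proof runs exactly along your lines --- linearity of $w_\al$ in the cut coordinates, the full fibers $\cL^{d_I(\al)}$ over $R(J_{W,I},\al)$, and cancellation of the two affine-hyperplane bundles over the complement, with Zariski-local triviality justified by local solvability just as you indicate. So there is nothing to add beyond noting that your route coincides with the cited one.
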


\subsubsection{Quantum torus and factorization formula}
Define the motivic quantum torus $\qt=\qt_Q$ to be the algebra given by the vector space
$$\lvc\pser{y_i,i\in Q_0}$$
with multiplication
$$y^\al\circ y^\be=(-\cL^\oh)^{\ang{\al,\be}}y^{\al+\be}.$$
We organize the virtual motives defined earlier in generating functions in \qt. Namely, we define
\begin{equation}
A_U=\sum_{\al\in\cN^{Q_0}}[\gM(J,\al)]_\vir y^\al,\qquad A_{\ze,\mu}
=\sum_{\over{\al\in\cN^{Q_0}}{\mu_\ze(\al)=\mu}}[\gM_\ze(J,\al)]_\vir y^\al
\label{eq:p05}
\end{equation}
for $\mu\in\cR$. The following result was proved in \cite{mozgovoy_motivica,nagao_wall-crossing}. A stronger result without the assumption of the existence of a cut was proved in \cite{kontsevich_stability}.

\begin{prp}
\label{factor}
For any stability parameter \ze we have
$$A_U=\prod^{\leftarrow}_{\mu\in\cR} A_{\ze,\mu},$$
where the product is taken in the decreasing order of $\mu\in\cR$.
\end{prp}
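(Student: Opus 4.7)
My plan is to prove the factorization via the Harder--Narasimhan stratification of $R(J,\al)$ with respect to $\ze$. Every nonzero $J$-module $M$ admits a unique filtration $0=M_0\sb M_1\sb\cdots\sb M_k=M$ with $\ze$-semistable subquotients of strictly decreasing slopes $\mu_1>\cdots>\mu_k$. Setting $\al_i=\udim(M_i/M_{i-1})$ partitions $R(J,\al)$ into locally closed strata $R(J,\al)_{\al_\bullet}$ indexed by ordered sequences $\al_\bullet=(\al_1,\dots,\al_k)$ with $\sum\al_i=\al$ and strictly decreasing slopes. I would show that the contribution of each stratum to the virtual motive matches exactly one term in the expansion of the iterated quantum torus product on the right-hand side.

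First I would invoke the dimensional reduction from Proposition \ref{prp:reduction} to replace $[w_\al\inv(0)]-[w_\al\inv(1)]$ by $\cL^{d_I(\al)}[R(J_{W,I},\al)]$, and likewise on the $\ze$-semistable locus; the torus action from Remark \ref{rem:cut} preserves the $G_\al$-action and is compatible with $\ze$-stability (which depends only on dimension vectors), so the reduction commutes with the HN stratification. Next I would establish the HN identity in the motivic Hall algebra of $Q_I$-representations satisfying the relations defining $J_{W,I}$: the stratum of type $\al_\bullet$ (modulo $G_\al$) fibers over $\prod_i[R_\ze(J_{W,I},\al_i)/G_{\al_i}]$ via the associated-graded map, and the fiber is a Zariski-locally trivial affine bundle whose relative dimension equals the total extension space compatible with the filtration. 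Combined with the motive of the parabolic subgroup of $G_\al$ preserving a standard flag, this yields
$$\frac{[R(J_{W,I},\al)]}{[G_\al]}=\sum_{\al_\bullet}\cL^{-\sum_{i<j}\hi_{Q_I}(\al_i,\al_j)}\prod_i\frac{[R_\ze(J_{W,I},\al_i)]}{[G_{\al_i}]}.$$

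To finish, I would reassemble the virtual motives \eqref{eq:p03} and \eqref{eq:p04}. Using $\hi_Q(\al,\al)=\sum_i\hi_Q(\al_i,\al_i)+2\sum_{i<j}\hi_Q(\al_i,\al_j)$, the additivity of $d_I$, and $\hi_{Q_I}(\al,\be)-\hi_Q(\al,\be)=\sum_{(a:i\to j)\in I}\al_i\be_j$, the prefactors of $\cL^\oh$ on both sides differ by exactly
$$\sum_{i<j}\bigl(\hi_Q(\al_i,\al_j)-\hi_Q(\al_j,\al_i)\bigr)=\sum_{i<j}\ang{\al_i,\al_j},$$
which is precisely the exponent produced by the quantum torus product $y^{\al_1}\circ\cdots\circ y^{\al_k}$. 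Summing over all HN types then identifies the $y^\al$-coefficient of $A_U$ with that of $\prod^{\leftarrow}_\mu A_{\ze,\mu}$.

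The main obstacle is verifying that the HN fibration is a genuine (Zariski-locally trivial) affine bundle in the presence of the relations cutting out $R(J_{W,I},\al)$ inside $R(Q_I,\al)$, and not merely a morphism with the motivically correct fibers. One route is to prove the analogous factorization first in the Hall algebra of $\cC Q_I$ without relations (where this is Reineke's standard calculation) and then restrict to the zero locus of the relations using the existence of the cut; alternatively, one can work directly with motivic vanishing cycles of $w_\al$ and use the $\cC^*$-action from Remark \ref{rem:cut} to identify the vanishing cycle globally with an ordinary motive, as in \cite{mozgovoy_motivica,nagao_wall-crossing}. Once the affine bundle structure is in place, everything else is bookkeeping with Euler forms and powers of $\cL^\oh$.
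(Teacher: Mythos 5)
The paper itself supplies no proof of Proposition \ref{factor}: it is quoted as a theorem of \cite{mozgovoy_motivica,nagao_wall-crossing}, with \cite{kontsevich_stability} cited for the version without a cut. Your proposal is therefore an attempt at a reconstruction rather than a mirror of what the paper does, and the overall skeleton (Harder--Narasimhan stratification, dimensional reduction via the cut, then bookkeeping against the quantum torus product) is indeed the right framework. But two things go wrong.

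The bookkeeping as written does not close. You invoke $\hi_Q(\al,\al)=\sum_i\hi_Q(\al_i,\al_i)+2\sum_{i<j}\hi_Q(\al_i,\al_j)$, which presumes $\hi_Q$ is symmetric; it is not, and the correct expansion is $\hi_Q(\al,\al)=\sum_i\hi_Q(\al_i,\al_i)+\sum_{i<j}\bigl(\hi_Q(\al_i,\al_j)+\hi_Q(\al_j,\al_i)\bigr)$. Likewise $d_I$ is not additive: $d_I(\al)=\sum_i d_I(\al_i)+\sum_{i<j}\bigl(d_I(\al_i,\al_j)+d_I(\al_j,\al_i)\bigr)$, where $d_I(\be,\ga)=\sum_{(a:p\to q)\in I}\be_p\ga_q$. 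Carrying these through, the exponent in the HN recursion that actually makes the coefficient of $y^\al$ match the product $y^{\al_1}\circ\cdots\circ y^{\al_k}=(-\cL^\oh)^{\sum_{i<j}\ang{\al_i,\al_j}}y^\al$ is not $-\sum_{i<j}\hi_{Q_I}(\al_i,\al_j)$: the $d_I$ cross terms survive, and the Euler form enters with the indices in the other order, consistent with Reineke's recursion for a hereditary algebra. So the intermediate identity is false as stated even modulo the geometric input.

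More seriously, the geometric input itself is the real gap and your two proposed routes do not close it. Restricting Reineke's affine-bundle statement for $R(Q_I,\al)$ to the relation locus $R(J_{W,I},\al)$ is precisely where the difficulty sits: the intersection of an HN stratum with the relation variety is \emph{not} a Zariski-locally trivial affine bundle over $\prod_i R_\ze(J_{W,I},\al_i)$ in general, because the relations cut the extension spaces by conditions whose rank can jump over the base, so the phrase \lq\lq relative dimension equals the total extension space compatible with the filtration\rq\rq\ has no meaning there. The existence of a cut does not repair this by itself. What the cited arguments actually do is stratify the ambient $R(Q,\al)$ (where the HN strata genuinely are affine bundles, $\cC Q$ being hereditary), and exploit that $W$ is a sum of cycles so that $w_\al$ restricted to an HN stratum depends only on the associated graded; the dimensional reduction is then applied stratum by stratum, and the virtual motives of the strata multiply out in the quantum torus. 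Your \lq\lq second route\rq\rq\ gestures at this but omits the one nontrivial input, namely the compatibility of $w_\al$ with the HN filtration, without which the vanishing-cycle computation does not localize to the semistable pieces. As it stands, that route is a pointer to \cite{mozgovoy_motivica,nagao_wall-crossing} rather than a proof.
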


If $Q$ is a symmetric quiver (\ie the number of arrows from $i$ to $j$ equals the number of arrows from $j$ to $i$ for any $i,j\in Q_0$) then the quantum torus \qt is commutative. In this case we define motivic Donaldson-Thomas invariants $\Om_\al\in\lvc$, $\al\in\cN^{Q_0}$, by the formula
\begin{equation}
A_U=\Exp\left(\frac{\sum_\al\Om_\al y^\al}{1-\cL\inv}\right).
\label{eq:DT}
\end{equation}

\subsubsection{Framed quiver representations}
\label{sec:framed}
Let $(Q,W)$ be as before and let $w\in\cN^{Q_0}$. We define a new quiver $Q'$ by adding one new vertex $\infty$ to $Q$ and adding $w_i$ arrows from $\infty$ to $i$ for every $i\in Q_0$. Considering $W$ as a potential in $Q'$ we can define the Jacobian algebra $J'=J_{Q',W}$.

Given a stability parameter $\ze\in\cR^{Q_0}$ and a dimension vector $\al\in\cN^{Q_0}$, we consider the moduli stacks $\gM_{\ze'}(Q',\al')$ and $\gM_{\ze'}(J',\al')$, where $\al'=(\al,1)$ and $\ze'=(\ze,\ze_\infty)$ with $\ze_\infty=-\ze\cdot\al$ (this condition means that $\ze'\cdot\al'=0$).

\begin{rmr}
The stack $\gM(Q',0')$ consists of one $1$-dimensional representation concentrated at vertex $\infty\in Q'_0$. We have
$$\lZ_\infty=[\gM(Q',0')]_\vir=\frac{(-\cL^\oh)^{\hi_{Q'}((0,1),(0,1))}}{[\GL_1]}=\frac{-\cL^\oh}{\cL-1}.$$
We normalize virtual motives with respect to this one and define
$$[\gM(Q',\al)]_\vir=\lZ_\infty\inv\cdot [\gM(Q',\al')]_\vir,$$
which is the virtual motive of the stack $\gM(Q',\al)=[R(Q',\al')/\GL_\al]$ (note that here we take the quotient stack with respect to $\GL_\al$ and not with respect to $\GL_{\al'}=\GL_\al\xx\cC^*$). In the same way we define
$$[\gM_\ze(Q',\al)]_\vir=\lZ_\infty\inv\cdot [\gM_{\ze'}(Q',\al')]_\vir,\qquad
[\gM_\ze(J',\al)]_\vir=\lZ_\infty\inv\cdot [\gM_{\ze'}(J',\al')]_\vir.$$
\end{rmr}

We define the generating series of virtual motives
$$\lZ_\ze=\sum_{\al\in\cN^{Q_0}}[\gM_\ze(J',\al)]_\vir y^\al\in\qt.$$

\begin{rmr}
\label{rmr:NCDT}
Let $\ze\in\cR^{Q_0}$ be such that $\ze_i=-1$, $i\in Q_0$. The moduli stack $\gM_\ze(Q',\al)$ consists of $Q'$-representations $M$ generated by $M_\infty$. The virtual motives $[\gM_\ze(J',\al)]_\vir$ are called the non-commutative Donaldson-Thomas invariants of $J'$ and are denoted by $[\gM_{NCDT}(J',\al)]_\vir$. The corresponding generating function is denoted by $\lZ_{NCDT}$ \cite{szendroi_non-commutative}.
\end{rmr}

The following result was proved in \cite[Cor.~4.17]{mozgovoy_motivica}.

\begin{prp}
For any stability parameter $\ze\in\cR^{Q_0}$ we have
$$\lZ_\ze=S_w\bigg(\prod_{\mu\le0}^\leftarrow A_{\ze,\mu}\bigg)\circ 
S_{-w}\bigg(\prod_{\mu<0}^\leftarrow A_{\ze,\mu}\bigg)\inv,$$
where, for any $v\in\cZ^{Q_0}$, we define $S_v:\qt\to\qt$, $y^\al\mto (-\cL^\oh)^{v\cdot\al}y^\al$.
\end{prp}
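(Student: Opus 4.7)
The plan is to derive the formula by applying the factorization formula (Proposition \ref{factor}) to the enlarged quiver $(Q',W)$ equipped with the stability parameter $\ze'$, and then to isolate the framed ($\al_\infty=1$) contribution. A first preparatory computation is to determine how the extra vertex interacts with the quantum torus: since the only arrows involving $\infty$ are the $w_i$ arrows $\infty\to i$, one finds $\hi_{Q'}((\al,0),(0,1))=0$ and $\hi_{Q'}((0,1),(\al,0))=-w\cdot\al$, so the skew form gives $\ang{(\al,0),(0,1)}_{Q'}=w\cdot\al$ and $\ang{(0,1),(\al,0)}_{Q'}=-w\cdot\al$. Thus $y^{(\al,0)}\circ y^{(0,1)}=(-\cL^\oh)^{w\cdot\al}y^{(\al,1)}$ and $y^{(0,1)}\circ y^{(\al,0)}=(-\cL^\oh)^{-w\cdot\al}y^{(\al,1)}$. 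In other words, left- and right-multiplication by $y^{(0,1)}$ in $\qt_{Q'}$ realize exactly the shift operators $S_w$ and $S_{-w}$ acting on the $\al_\infty=0$ subalgebra, which is precisely $\qt_Q$.

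Next I would unpack Proposition \ref{factor} for $A_{U,Q'}$ at stability $\ze'=(\ze,-\ze\cdot\al)$. Each factor $A_{\ze',\mu'}$ splits by the value of $\al_\infty\in\{0,1\}$. On $\al_\infty=0$ the $\ze'$-semistability condition reduces to ordinary $\ze$-semistability with the same slope, so these contributions are exactly the $A_{\ze,\mu}$ of $(Q,W)$. On $\al_\infty=1$ the slope is zero precisely when $\ze'\cdot(\al,1)=0$, which is how $\ze_\infty$ was chosen. A direct analysis of the two types of $Q'$-subrepresentations of $(M,f)$ of dimension $(\al,1)$ shows that $(M,f)$ is $\ze'$-semistable of slope $0$ iff (a) every $Q$-subrep $N\sbe M$ satisfies $\mu_\ze(N)\le 0$ and (b) every $Q$-subrep $N\spe\im f$ satisfies $\mu_\ze(M/N)\ge 0$. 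Condition (a) forces all HN-slopes of $M$ to be $\le 0$, while condition (b) forces $\im f$ to lie above the strictly negative HN-part. This stratification of the framed moduli by the HN-type of $M$ identifies, after including the normalization constant $\lZ_\infty$, the $\al_\infty=1$ contribution to $A_{\ze',0}$ with a product of three pieces: the unframed $\ze$-semistable $Q$-reps of slope $0$ sitting to the right of the framing, the framed slope-$0$ part $\lZ_\ze$, and the unframed $\ze$-semistable $Q$-reps of strictly negative slope sitting to the left — arranged through the quantum-torus multiplication.

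Finally, comparing the resulting expression for the $\al_\infty=1$ component of $A_{U,Q'}$ with the factorization of $A_U$ for $(Q,W)$ and canceling the unframed $A_{\ze,\mu}$ factors that appear on both sides leads to
\[
\lZ_\ze\circ y^{(0,1)}=\Big(S_w\prod^{\leftarrow}_{\mu\le 0}A_{\ze,\mu}\Big)\circ y^{(0,1)}\circ \Big(S_{-w}\prod^{\leftarrow}_{\mu<0}A_{\ze,\mu}\Big)\inv,
\]
from which the stated identity follows by stripping the factor $y^{(0,1)}$. The main obstacle is the delicate bookkeeping at slope $\mu=0$: one must explain why the inclusion $\mu\le 0$ on the left and the strict inequality $\mu<0$ on the right appear asymmetrically. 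This asymmetry is a direct reflection of the fact that $\im f$ is allowed to meet the slope-$0$ HN piece of $M$ but is forbidden from dipping into any strictly negative HN piece, and translating this geometric fact into the precise order of $S_w$ versus $S_{-w}$ in the product requires a careful HN-stratification argument together with the motivic Fubini/Krull--Schmidt computation underlying Theorem \ref{aut}.
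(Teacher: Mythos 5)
The paper does not actually prove this proposition; it simply cites it as \cite[Cor.~4.17]{mozgovoy_motivica}, so there is no in-paper proof to compare against. Your overall strategy --- pass to the framed quiver $(Q',W)$, compute the quantum-torus commutation with the framing vertex, analyze $\ze'$-semistability of triples $(M,\cC,s)$ via the Harder--Narasimhan filtration, and extract the $\al_\infty=1$ component --- is the standard one and plausibly matches the cited reference in outline.

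However, your intermediate identity in $\qt_{Q'}$ is wrong: you have placed the shifts $S_w$ and $S_{-w}$ \emph{inside} the product in $\qt_{Q'}$, but those shifts are exactly what falls out of the commutation with $y^{(0,1)}$ when you translate back to $\qt_Q$, so you end up double-counting them. Concretely, write $\iota:\qt_Q\to\qt_{Q'}$, $y^\al\mto y^{(\al,0)}$ and $T:y^\al\mto y^{(\al,1)}$. From the relations you computed, $\iota(x)\circ y^{(0,1)}=T(S_w(x))$, $y^{(0,1)}\circ\iota(x)=T(S_{-w}(x))$, and more generally $T(a)\circ\iota(b)=T\bigl(a\circ S_{-w}(b)\bigr)$. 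Hence for $A,B\in\qt_Q$ one has
$$\iota(A)\circ y^{(0,1)}\circ\iota(B)\inv = T\bigl(S_w(A)\circ S_{-w}(B)\inv\bigr),$$
so the correct intermediate should read $T(\lZ_\ze)=\iota\bigl(\prod_{\mu\le0}^\leftarrow A_{\ze,\mu}\bigr)\circ y^{(0,1)}\circ\iota\bigl(\prod_{\mu<0}^\leftarrow A_{\ze,\mu}\bigr)\inv$ with \emph{no} $S_{\pm w}$ inside; the shifts in the final statement come purely from the quantum-torus bookkeeping, not from the HN stratification. If one instead takes your proposed
$$\lZ_\ze\circ y^{(0,1)}=\Bigl(S_w\prod^{\leftarrow}_{\mu\le 0}A_{\ze,\mu}\Bigr)\circ y^{(0,1)}\circ \Bigl(S_{-w}\prod^{\leftarrow}_{\mu<0}A_{\ze,\mu}\Bigr)\inv,$$
then unwinding both sides through $T$ gives $S_w(\lZ_\ze)=S_{2w}(A)\circ S_{-2w}(B)\inv$, i.e.\ $\lZ_\ze=S_w(A)\circ S_{-3w}(B)\inv$ with $A=\prod_{\mu\le0}^\leftarrow A_{\ze,\mu}$, $B=\prod_{\mu<0}^\leftarrow A_{\ze,\mu}$, which is not the claimed identity. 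A second, smaller gap: the stability parameter $\ze'=(\ze,-\ze\cdot\al)$ depends on $\al$, so Proposition \ref{factor} for $Q'$ cannot be applied with a single fixed $\ze'$; one must argue slice by slice (fixing $\ze\cdot\al$) or reformulate the HN recursion directly for framed objects as is done in the cited paper.
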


Assume that $Q$ is symmetric and the Donaldson-Thomas invariants $\Om_\al\in\lvc$ are defined as in \eqref{eq:DT}.
Then, according to Proposition \ref{factor}
$$\prod_{\mu\le0}^\leftarrow A_{\ze,\mu}=
\Exp\left(\frac{\sum_{\ze\cdot\al\le0}\Om_\al y^\al}{1-\cL\inv}\right),\qquad
\prod_{\mu<0}^\leftarrow A_{\ze,\mu}=
\Exp\left(\frac{\sum_{\ze\cdot\al<0}\Om_\al y^\al}{1-\cL\inv}\right)$$
and this means that we can compute $\lZ_\ze$ for any stability parameter $\ze\in\cR^{Q_0}$ if we know $A_U$.

\begin{crl}
\label{crl:Z factoriz}
Assume that $Q$ is symmetric and $\ze\in\cR^{Q_0}$ is such that $\ze\cdot\al\ne0$ whenever $\Om_\al\ne0$ (we will say that \ze is generic in this case). Then
$$\lZ_\ze=S_{-w}\Exp\Bigg(\sum_{\ze\cdot\al<0}\frac{\cL^{w\cdot\al}-1}{1-\cL\inv}\Om_\al y^\al\Bigg).$$
\end{crl}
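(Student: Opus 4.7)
The plan is to start from the preceding proposition and substitute the DT factorizations displayed just above the corollary; since $Q$ is symmetric, the skew form $\ang{\cdot,\cdot}$ vanishes identically, so the quantum torus $\qt$ is commutative, $\circ$ becomes ordinary multiplication, and all the manipulations can be carried out in a commutative $\la$-ring.

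First I would use the genericity hypothesis: if $\ze\cdot\al=0$ then $\Om_\al=0$, so the two sums $\sum_{\ze\cdot\al\le 0}\Om_\al y^\al$ and $\sum_{\ze\cdot\al<0}\Om_\al y^\al$ coincide. Hence both products $\prod_{\mu\le0}^\leftarrow A_{\ze,\mu}$ and $\prod_{\mu<0}^\leftarrow A_{\ze,\mu}$ collapse to the single series
$$P:=\Exp\Bigl(\tfrac{\sum_{\ze\cdot\al<0}\Om_\al y^\al}{1-\cL\inv}\Bigr),$$
and the preceding proposition reduces to $\lZ_\ze=S_w(P)\cdot S_{-w}(P)\inv$. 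Next I would factor out $S_{-w}$ via the identity $S_w=S_{-w}\circ S_{2w}$ of algebra endomorphisms of $\qt$, rewriting the expression as $\lZ_\ze=S_{-w}\bigl(S_{2w}(P)\cdot P\inv\bigr)$.

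The one technical point that needs care, and what I view as the main (though modest) obstacle, is that $S_{2w}$ is a $\la$-ring endomorphism of $\qt$ whereas $S_w$ itself is \emph{not}: since $S_{2w}(y^\al)=\cL^{w\cdot\al}y^\al$ carries no sign, one checks directly that $\psi_n\circ S_{2w}=S_{2w}\circ\psi_n$, whereas the sign in $(-\cL^\oh)^{w\cdot\al}$ breaks the corresponding identity for $S_w$ whenever $w\cdot\al$ is odd. This is precisely why the twist has to be split as $S_w=S_{-w}\circ S_{2w}$ before any $\Exp$ can be touched. Once the $\la$-compatibility of $S_{2w}$ is in hand, I can move $S_{2w}$ inside $\Exp$ to obtain
$$S_{2w}(P)\cdot P\inv=\Exp\Bigl(\tfrac{\sum_{\ze\cdot\al<0}(\cL^{w\cdot\al}-1)\Om_\al y^\al}{1-\cL\inv}\Bigr),$$
and applying the outer $S_{-w}$ yields exactly the formula claimed in the statement.
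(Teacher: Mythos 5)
Your argument is correct and is essentially the proof the paper leaves implicit: by genericity both slope products collapse to $P=\Exp\bigl(\sum_{\ze\cdot\al<0}\Om_\al y^\al/(1-\cL\inv)\bigr)$, and the preceding proposition together with commutativity of $\qt$ (as $Q$ is symmetric) gives $\lZ_\ze=S_w(P)\cdot S_{-w}(P)\inv=S_{-w}\bigl(S_{2w}(P)\cdot P\inv\bigr)$, which yields the stated formula. Your observation that only $S_{2w}$ (not $S_w$, because of the sign in $(-\cL^\oh)^{w\cdot\al}$) commutes with the Adams operations and hence with $\Exp$ is exactly the point that explains why the twist $S_{-w}$ must remain outside the $\Exp$ in the corollary, so no gap remains.
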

 
\begin{rmr}
Using the last corollary we can compute the Euler number specialization
$$\ub\lZ_\ze=\ub S_w\Exp\bigg(\sum_{\ze\cdot\al<0}(w\cdot\al)\ub\Om_\al y^\al\bigg),$$
where $\ub S_w$ is given by $y^\al\mto(-1)^{w\cdot\al}y^\al$.
\end{rmr}

\section{Perverse coherent sheaves and DT/PT invariants}
\label{sec:perverse}
The goal of this section is to introduce the technique due to Nagao and Nakajima \cite{nagao_counting} that in some situations allows to interpret Pandharipande-Thomas moduli spaces of stable pairs (as well as Donaldson-Thomas moduli spaces) on a small crepant resolution of a singular affine $3$-Calabi-Yau variety in terms of the moduli spaces of representations of its non-commutative crepant resolution.

\subsection{Riemann-Roch theorem}
Let $Y$ be a $3$-Calabi-Yau manifold. Denote by $\Coh_{c,\le1}$ the category of coherent sheaves over $Y$ having compact support of dimension $\le1$. For any $F\in\Coh_{c,\le1}$ we have $\ch_0F=0$, $\ch_1F=0$. The pair
$$(\ch_2F,\ch_3F)\in H^4_c(Y,\cZ)\oplus H^6_c(Y,\cZ)$$
is given by $(\be,n)\in H_2(Y,\cZ)\oplus\cZ$, where we identify $H^4_c(Y,\cZ)\iso H_2(Y,\cZ)$ and $H^6_c(Y,\cZ)\iso H_0(Y,\cZ)\iso\cZ$ using Poincar\'e duality, and where $\be$ is the class of the support of $F$ and $n=\hi(F)$. The last equation is a consequence of the following result.

\begin{lmm}[\cf {\cite[Section 3.2]{toda_stabilitya}}]
\label{lmm:RR}
Assume that $Y$ is a $3$-Calabi-Yau manifold. Then for any $E\in D^b(\Coh Y)$, $F\in D^b(\Coh_{c,\le1}Y)$ we have
$$\hi(E,F)=\sum_{i\in\cZ}(-1)^i\dim\Hom(E,F[i])=\ch_0E\ch_3F-\ch_1E\ch_2F.$$
\end{lmm}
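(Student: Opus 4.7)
The plan is to deduce this from the Hirzebruch--Riemann--Roch formula applied to the Euler pairing of complexes on the smooth (but possibly non-compact) threefold $Y$. Since $F$ has compact support, all $\Hom(E,F[i])$ are finite-dimensional and HRR in its compactly supported form gives
$$\hi(E,F)=\int_Y \ch(E\dual)\cdot\ch(F)\cdot\td(Y),$$
where the integrand is a compactly supported class by virtue of $\ch(F)$ being compactly supported. Only the component of cohomological degree $6$ (equivalently, the degree-$3$ part in Chern-character grading) contributes.

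Next I would impose the two structural simplifications. First, the Calabi--Yau condition $c_1(Y)=0$ forces the Todd class to have the form $\td(Y)=1+\frac{c_2(Y)}{12}+(\text{terms of odd Chern degree that vanish})$; the key point is that $\td_1(Y)=0$ and $\td_3(Y)=0$. Second, the assumption $F\in D^b(\Coh_{c,\le1}Y)$ implies $\ch_0 F=\ch_1 F=0$, so $\ch(F)=\ch_2 F+\ch_3 F$ starts in Chern-degree $2$. Combined with $\ch_i(E\dual)=(-1)^i\ch_i(E)$, the degree-$3$ part of $\ch(E\dual)\cdot\ch(F)\cdot\td(Y)$ reduces to
$$\ch_0(E)\cdot\ch_3(F)-\ch_1(E)\cdot\ch_2(F),$$
since the only other candidate contributions would pair $\ch(F)$ with $\td_1$ or pair $\ch_{\le 0}(E\dual)$ with $\ch_{\le 1}(F)$, all of which vanish.

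Finally, I would integrate: the term $\ch_0(E)\cdot\ch_3(F)$ lives in $H^6_c(Y,\cQ)$ and pairs with the fundamental class to give the value of $\ch_3 F\in H^6_c(Y,\cZ)\iso\cZ$ multiplied by the rank $\ch_0 E$, while $\ch_1(E)\cdot\ch_2(F)$ is the Poincar\'e pairing between $\ch_1(E)\in H^2(Y,\cZ)$ and $\ch_2(F)\in H^4_c(Y,\cZ)\iso H_2(Y,\cZ)$, i.e.\ the intersection of $c_1(E)$ with the support cycle of $F$. Under the identifications already set up in the paragraph preceding the lemma, this is exactly the stated formula.

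The only real point requiring care is the legitimacy of HRR in the non-compact setting; this is standard (one can reduce to the compact case by embedding the support of $F$ in a compactification, or directly invoke Grothendieck--Riemann--Roch for proper pushforward along $Y\to\pt$ applied to the compactly supported complex $E\dual\lts F$), and it is precisely the technical obstacle one needs to justify before the Chern-character bookkeeping above becomes conclusive.
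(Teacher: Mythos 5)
Your proof is correct and takes essentially the same route as the paper: apply Riemann--Roch for compactly supported objects and note that the only additional term, $\ch_0E\,\ch_2F\,\td_1Y$, dies because the Calabi--Yau condition gives $\td_1Y=0$. The paper states this in two lines; your version spells out the Chern-character bookkeeping and the justification of HRR in the non-compact setting, but the argument is the same.
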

\begin{proof}
It follows from the Riemann-Roch theorem that
$$\hi(E,F)=\ch_0E\ch_3F-\ch_1E\ch_2F+\ch_0E\ch_2F\td_1Y.$$
By the Calabi-Yau condition $\td_1Y=0$.
\end{proof}

\subsection{Perverse coherent sheaves}
\label{sec:perv}
Let $f:Y\to X$ be a projective morphism between
algebraic varieties such that the fibers of $f$ have dimension $\le1$, $Rf_*\lO_Y=\lO_X$, and $X=\Spec R$ is affine. 

Following \cite{bridgeland_flops,vandenbergh_three-dimensional}, we define $\Per(Y/X)={}\inv\Per(Y/X)$ to be the subcategory of $D^b(Y)=D^b(\Coh Y)$ consisting of objects $E$ such that
\begin{enumerate}
	\item $H^i(E)=0$ for $i\not\in\set{0,-1}$,
	\item $R^1f_*H^0(E)=0$, $\Hom(H^0(E),F)=0$ if $Rf_*F=0$,
	\item $R^0f_*H^{-1}(E)=0$.
\end{enumerate}
It is shown in \cite{bridgeland_flops,vandenbergh_three-dimensional} that $\Per(Y/X)$ is the heart of some $t$-structure on $D^b(Y)$.

Let $\lP\in\Per(Y/X)$ be a projective generator and let $J=\End_Y(\lP)\op$.
We denote by $\mod J$ the category of finitely-generated left $J$-modules, or equivalently, the category of $J$-modules finitely generated over $R$.
Then the functors
\begin{align*}
\Phi=&\RHom(\lP,-):D^b(\Coh Y)\to D^b(\mod J),\\
\Psi=&\lP\lts-:D^b(\mod J)\to D^b(\Coh Y)	
\end{align*}
are inverse equivalences which restrict to the equivalences between $\Per(Y/X)$ and $\mod J$ \cite[Corollary 3.2.8]{vandenbergh_three-dimensional}.

Let $\Coh_cY$ denote the category of coherent sheaves over $Y$ having compact support. The image of this support in $X$ is $0$-dimensional, as $X$ is affine. Therefore the support of any sheaf in $\Coh_c Y$ has dimension $\le1$ and $\Coh_cY=\Coh_{c,\le1}Y$. Let $\mod_c J$ denote the category of modules over $J$ having compact (\ie $0$-dimensional) support as sheaves over $X$. Equivalently, $\mod_c J$ consists of finite-dimensional $J$-modules. Let $D^b_c(Y)=D^b_c(\Coh Y)$ (resp.\ $D^b_c(\mod J)$) denote the category of objects with cohomologies in $\Coh_c Y$ (resp.\ $\mod_cJ$). Let $\Per_c(Y/X)$ be the intersection of $\Per(Y/X)$ with $D^b_c(Y)$. The functors $\Phi,\Psi$ restrict to the equivalences between $D^b_c(Y)$ and $D^b_c(\mod J)$ and between $\Per_c(Y/X)$ and $\mod_cJ$.

\subsection{Category of triples}
\label{sec:triples}
From now on we will assume that $\lP=\oplus_{i\in I}\lP_i$ and there is a distinguished element $0\in I$ such that $\lP_0=\lO_Y$  and $(c_1(\lP_i))_{i\in I\ms\set0}$ form a basis of $H^2(Y,\cZ)$. Let $I_*=I\ms\set0$ and $\lP_*=\sum_{i\in I_*}\lP_i$.
We can decompose $\ub F=\Phi(F)=\oplus_{i\in I}\ub F_i$, where $\ub F_i=\RHom_Y(\lP_i,F)$.
If $F\in\Per(Y/X)$ then $\ub F_i=\Hom_Y(\lP_i,F)$.

Let $(e_i)_{i\in I}$ be idempotents in $J$ corresponding to the decomposition $\lP=\oplus_{i\in I}\lP_i$.
Any $J$-module $M$ can be decomposed as $M=\oplus_{i\in I}M_i$,
where $M_i=e_iM$. 
Define the abelian category $\mod J'$ to be the category consisting of triples $(M,M_\infty,s)$, where $M\in\mod J$, $M_\infty$ is a vector space and $s:M_\infty\to M_0$ is a linear map. We can define the corresponding algebra $J'$ to be generated by $J$ and two elements $e_\infty,s$ subject to the relations
$$e_\infty 1_J=1_Je_\infty=0,\quad e_\infty^2=e_\infty,\quad se_\infty=s,\quad e_0s=s.$$ 
We denote by $\mod_cJ'$ the category of finite-dimensional $J'$-modules.

\begin{rmr}
Given a morphism $s:V\ts\lO_Y\to F$ in $D^b(Y)$ with $F\in\Per_c(Y/X)$ and $V$ a vector space, we can associate with it a triple $(\ub F,V,\ub s)\in\mod J'$ using the fact that
$$s\in\Hom_Y(V\ts\lO_Y,F)\iso\Hom(V,\Hom_Y(\lO_Y,F))\iso\Hom(V,\ub F_0).$$
\end{rmr}

\subsection{\texorpdfstring{$3$}{3}-Calabi-Yau case}
Assume that $Y$ is a $3$-Calabi-Yau manifold.

\begin{lmm}
Given $F\in\Per_c(Y/X)$ with $(\ch_2F,\ch_3F)=(\be,n)$, we have
$$\dim\ub F_i=n\rk\lP_i-c_1(\lP_i)\cdot\be,\qquad i\in I$$
and in particular $\dim\ub F_0=n$.
\end{lmm}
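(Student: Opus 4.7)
The plan is to reduce the dimension count to the Riemann--Roch formula of Lemma \ref{lmm:RR}. First I would observe that since $F\in\Per_c(Y/X)$, the equivalence $\Phi$ from Section \ref{sec:perv} sends $F$ to an honest $J$-module $\ub F=\Phi(F)\in\mod_c J$ rather than to a complex with cohomology in several degrees. Consequently $\RHom_Y(\lP,F)=\Phi(F)=\ub F$ is concentrated in degree $0$, and since this decomposes as $\bigoplus_{i\in I}\RHom_Y(\lP_i,F)$, each direct summand $\RHom_Y(\lP_i,F)=e_i\Phi(F)=\ub F_i$ is likewise concentrated in degree $0$. In particular $\Ext^k_Y(\lP_i,F)=0$ for $k\ne 0$, so
$$\dim\ub F_i=\hi(\lP_i,F).$$

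Next I would apply Lemma \ref{lmm:RR} with $E=\lP_i$ and the given $F$ (noting that $F\in\Coh_{c,\le 1}Y$ since the compact support of any element of $\Per_c(Y/X)$ is zero-dimensional over the affine $X$, hence of dimension $\le 1$ in $Y$). This yields
$$\hi(\lP_i,F)=\ch_0(\lP_i)\ch_3(F)-\ch_1(\lP_i)\ch_2(F)=n\rk\lP_i-c_1(\lP_i)\cdot\be,$$
using $\ch_0(\lP_i)=\rk\lP_i$, $\ch_1(\lP_i)=c_1(\lP_i)$, and $(\ch_2 F,\ch_3 F)=(\be,n)$. This proves the main formula. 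The special case $i=0$ then follows immediately from $\lP_0=\lO_Y$, since $\rk\lO_Y=1$ and $c_1(\lO_Y)=0$, giving $\dim\ub F_0=n$.

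There is no real obstacle; all the heavy lifting is done by the prior results. The only point that requires a line of verification is the vanishing of the higher $\Ext$ groups, which is the reason $\Phi$ was set up in Section \ref{sec:perv} precisely as an equivalence onto $\mod J$ (and onto $\mod_c J$ after restriction), rather than merely as a derived equivalence.
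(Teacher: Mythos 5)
Your proof is correct and follows essentially the same route as the paper: both reduce the computation to the observation that $\ub F_i=\RHom_Y(\lP_i,F)=\Hom_Y(\lP_i,F)$ (so that $\dim\ub F_i=\hi(\lP_i,F)$) and then invoke the Riemann--Roch Lemma~\ref{lmm:RR}. You spell out the vanishing of higher $\Ext$ groups via the equivalence $\Phi:\Per_c(Y/X)\xrightarrow{\sim}\mod_cJ$, which the paper states more tersely; the content is the same.
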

\begin{proof}
We have $\ub F_i=\RHom(\lP_i,F)=\Hom(\lP_i,F)$. Therefore, applying Lemma \ref{lmm:RR}, we have
$$\dim\ub F_i=\hi(\lP_i,F)=\rk\lP_i\cdot\hi(F)-c_1(\lP_i)\cdot\ch_2F$$
and in particular $\dim\ub F_0=\hi(F)=n$.
\end{proof}

In view of this result
(and our assumption that $(c_1(\lP_i))_{i\in I_*}$ forms a basis of $H^2(Y,\cZ)$) 
we will identify $H_2(Y,\cZ)\oplus \cZ$ with $\cZ^I$ by sending $(\be,n)$ to $\al\in\cZ^I$ given by
\begin{equation}
\al_i=n\rk\lP_i-c_1(\lP_i)\cdot\be,\qquad i\in I.
\label{eq:n,be}
\end{equation}

\subsection{Stability and DT/PT invariants}
\label{DT/PT as reps}
Let as before $Y$ be a $3$-Calabi-Yau manifold.
For any $M\in\mod_c J$ we define $\lb\dim M=(\dim M_i)_{i\in I}\in\cN^I$.
Let $\ze\in\cR^I$. We say that an object $(M,M_\infty,s)\in\mod_cJ'$ with $\dim M_\infty=1$ is $\ze$-stable if it is stable with respect to $(\ze,\ze_\infty)$, where $\ze_\infty=-\ze\cdot\lb\dim M$. This means that for any proper nonzero subobject $(N,N_\infty,t)$ of $(M,M_\infty,s)$ we have
$$
\ze\cdot\lb\dim N+\ze_\infty\dim N_\infty
<0=
\ze\cdot\lb\dim M+\ze_\infty\dim M_\infty
.$$

Let $\ze^\pm=(\ze^\pm_i)_{i\in I}$ be defined by
\begin{equation}
\ze^\pm_0=-\rk\lP_*\pm\eps,\quad \ze^\pm_i=1,\ i\in I_*
\label{eq:ksi}
\end{equation}
for sufficiently small $\eps>0$.

\begin{rmr}
\label{zeta-explanation}
Consider an $f$-ample divisor $\om=c_1(\lP)$. For any $F\in\Per_c(Y/X)$, we have
$\om\cdot\ch_2F=-\ze\cdot\lb\dim\ub F$, where $\ze\in\cZ^I$ is defined by $\ze_0=-\rk\lP_*$ and $\ze_i=1$, $i\in I_*$ (\cf \cite[Prop.~3.5]{gholampour_counting} in the McKay situation). This is the reason for the above choice of $\ze^\pm$.
\end{rmr}

The following result is proved in \cite{nagao_counting}.

\begin{prp}
\label{NN equiv}
Let $s:\lO_Y\to F$ be some morphism in $D^b(Y)$. Then
\begin{enumerate}
	\item $F\in\Per_c(Y/X)$ and the corresponding $J'$-module $(\ub F,\cC,\ub s)$ is $\ze^-$-stable if and only if $s:\lO_Y\to F$ is a DT-morphism (\ie $F\in\Coh_cY$ and $\coker s=0$).
	\item $F\in\Per_c(Y/X)$ and the corresponding $J'$-module $(\ub F,\cC,\ub s)$ is $\ze^+$-stable if and only if $s:\lO_Y\to F$ is a PT-morphism (\ie $F\in\Coh_cY$ is pure of dimension $1$ and $\coker s$ has $0$-dimensional support).
\end{enumerate}
\end{prp}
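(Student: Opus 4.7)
The strategy is to translate the stability condition on the triple $(\ub F,\cC,\ub s)$ into geometric conditions on $(F,s)$ via the derived equivalence $\Phi\colon\Per_c(Y/X)\xrightarrow{\sim}\mod_c J$ of Section~\ref{sec:perv}, and verify that $\ze^\pm$ from \eqref{eq:ksi} sit on opposite sides of a single wall whose two chambers correspond respectively to the DT and PT conditions.

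Any proper nonzero sub-triple $(N,N_\infty,t)\sb(\ub F,\cC,\ub s)$ has either $N_\infty=0$ (``Type~I'', corresponding under $\Phi\inv$ to a perverse subobject $N'\emb F$) or $N_\infty=\cC$ (``Type~II'', where in addition $s\colon\lO_Y\to F$ factors through $N'$ in $D^b(Y)$). Using Remark~\ref{zeta-explanation} together with $\dim N_0=\hi(\lO_Y,N')=\hi(N')$ from Lemma~\ref{lmm:RR}, one finds
$$\ze^\pm\cdot\udim N \;=\; -\om\cdot\ch_2 N' \;\mp\; \eps\,\hi(N'),$$
where $\om=c_1(\lP)$ is $f$-ample; consequently $\om\cdot\ch_2 G\ge 0$ for any $G\in\Coh_{c,\le1}Y$, with equality iff $\supp G$ is $0$-dimensional. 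The sign of $\om\cdot\ch_2$ therefore separates $1$- from $0$-dimensional support, while $\hi$ plays the role of tiebreaker whose direction is flipped by the choice of $\ze^\pm$.

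For part~(1), I would first show that $\ze^-$-stability forces $H^{-1}(F)=0$: were it nonzero, condition~(3) of $\Per(Y/X)$ combined with compactness of the support would force $H^{-1}(F)$ to be pure of dimension~$1$, so the perverse sub $H^{-1}(F)[1]\emb F$ satisfies $\om\cdot\ch_2(H^{-1}(F)[1])<0$ and hence has strictly positive $\ze^-$-slope, destabilizing. Thus $F\in\Coh_c Y$. Next, $\ze^-$-stability forces $s$ surjective: otherwise $\im(s)\subsetneq F$ is a Type~II destabilizer with $\om$-positive sheaf quotient. Conversely, assuming $F$ coherent and $s$ surjective, the long exact sheaf cohomology of any perverse SES $0\to N'\to F\to Q'\to 0$ forces $N'$ itself to be a coherent sheaf, so $\om\cdot\ch_2 N'\ge 0$ and the $-\eps\hi$ correction makes every Type~I slope strictly negative; sheaf-surjectivity of $s$ then constrains any proper Type~II factorization to have perverse quotient of shift type $Q'=H^{-1}(Q')[1]$, which by condition~(3) again has strictly positive $\ze^-$-slope and so is non-destabilizing.

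For part~(2), the analysis is symmetric with the tiebreaker reversed. Now $0$-dimensional coherent subsheaves of $F$ become Type~I destabilizers under $\ze^+$ (since $\om\cdot\ch_2=0$ but $\eps\hi>0$), forcing $F$ pure of dimension~$1$; symmetrically, Type~II factorizations with $F/N'$ of $0$-dimensional support no longer destabilize, while those with $F/N'$ of $1$-dimensional support persist and are ruled out precisely when $\coker(s)$ has $0$-dimensional support. The main technical obstacle throughout is the mismatch between perverse subobjects and honest coherent subsheaves: a perverse sub $N'\emb F$ of a coherent $F$ is itself a sheaf, but the embedding need not be sheaf-injective, so the statement ``$s$ factors through $N'$ in $D^b(Y)$'' is subtler than factoring through a coherent subsheaf; disentangling this by comparing images at the sheaf level via the three defining conditions of $\Per(Y/X)$ and the long exact sheaf cohomology of the perverse SES is the technical heart of the argument.
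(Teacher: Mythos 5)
The paper does not prove this proposition --- it states ``The following result is proved in \cite{nagao_counting}'' and refers the reader to Nagao--Nakajima. So there is no in-paper proof to compare against; what you have written is an attempted reconstruction of the wall-and-chamber argument from that reference, and on the whole the strategy is the right one: reduce stability of a triple to numerical conditions on perverse subobjects/quotients, observe that $\ze^\pm$ straddle the wall $\om\cdot\ch_2=0$ with the Euler characteristic acting as $\eps$-tiebreaker, and match the two chambers to DT and PT. Two things, however, need attention.

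First, a sign error: from Remark \ref{zeta-explanation} and \eqref{eq:ksi}, one has $\ze^\pm\cdot\lb\dim\ub N' = -\om\cdot\ch_2 N' \pm\eps\,\hi(N')$, not $\mp\eps\,\hi(N')$ as displayed. Your subsequent reasoning (e.g.\ ``the $-\eps\hi$ correction'' for $\ze^-$, and ``$0$-dimensional subsheaves destabilize under $\ze^+$ since $\eps\hi>0$'') is consistent with the correct sign $\pm$, so this is only a typo in the displayed identity --- but as written the formula would send the reader to the wrong chamber.

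Second, and more seriously, the piece you yourself flag as ``the technical heart'' is a genuine gap, not a routine detail. The issues are: (a) Type~I destabilizers range over \emph{all} perverse subobjects $N'\emb F$, not just coherent subsheaves, and although $H^{-1}(F)=0$ forces $H^{-1}(N')=0$ via the long exact sheaf sequence, the map $H^0(N')\to F$ need not be injective; one has to use condition~(3) of $\Per(Y/X)$ on the perverse quotient $Q'$ (so $H^{-1}(Q')$ is pure of dimension one) to conclude that a $0$-dimensional $N'$ must in fact be a genuine subsheaf, which is what the purity of $F$ is needed to exclude. (b) For Type~II, the factorization condition means $\ub s(1)$ lies in $\ub N_0 = H^0(Y,H^0(N'))$, i.e.\ $s$ factors through $H^0(N')\to F$; consequently $H^0(Q')$ is a quotient of $\coker(s)$ but need not equal it, and conversely $\im(s)\sb F$ need not itself satisfy the perverse axioms ($R^1f_*\im(s)$ may be nonzero), so one cannot simply take $N'=\im(s)$. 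Producing the destabilizing Type~II subtriple when $\coker(s)$ has $1$-dimensional support requires taking the $J$-submodule of $\ub F$ generated by $\ub s(1)$ and analysing the perverse quotient, rather than the sheaf-theoretic image. These steps are where Nagao--Nakajima's argument actually lives; leaving them as a remark makes the proof incomplete.
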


Given a pair $(\be,n)\in H_2(Y,\cZ)\oplus\cZ$, let $I_n(Y,\be)$ be the moduli stack of $1$-dimensional subschemes $Z\sb Y$ such that $\hi(\lO_Z)=n$ and $[Z]=\be$. Equivalently, $I_n(Y,\be)$ is the moduli stack of DT-morphisms $f:\lO_X\to F$ such that $\hi(F)=n$, $\ch_2 F=\be$. It follows from Proposition \ref{NN equiv} that this moduli space can be identified with the moduli stack of $\ze^-$-stable $J'$-modules having dimension vector \al given by~\eqref{eq:n,be}.

Let $P_n(Y,\be)$ be the moduli stack of PT-morphisms $s:\lO_Y\to F$ with $\hi(F)=n$, $\ch_2(F)=\be$. It follows from Proposition \ref{NN equiv} that this moduli stack can be identified with the moduli stack of $\ze^+$-stable $J'$-modules having dimension vector \al given by~\eqref{eq:n,be}.

We define the series of motivic geometric Donaldson-Thomas invariants by
\begin{equation}
\lZ_{DT}=\sum_{(\be,n)}[I_n(Y,\be)]_\vir s^nQ^\be=\sum_{\al\in\cN^{I}}[\gM_{\ze^-}(J',\al)]_\vir y^\al
\label{eq:gDT}
\end{equation}
and the series of motivic Pandharipande-Thomas invariants by
\begin{equation}
\lZ_{PT}=\sum_{(\be,n)}[P_n(Y,\be)]_\vir s^nQ^\be=\sum_{\al\in\cN^{I}}[\gM_{\ze^+}(J',\al)]_\vir y^\al,
\label{eq:PT}
\end{equation}
where we identify $(\be,n)$ with \al using \eqref{eq:n,be} and we put
\begin{equation}
s=\prod_{i\in I}y_i^{\rk\lP_i},\qquad Q^\be=\prod_{i\in I}y_i^{-c_1(\lP_i)\cdot\be}.
\label{eq:identification}
\end{equation}

\section{McKay correspondence}
\label{sec:mckay}
In this section we will consider one particular example of the general framework studied in section \ref{sec:perverse}. This example comes from the McKay correspondence. More precisely, we will study crepant resolutions of the singular affine $3$-Calabi-Yau varieties $\cC^2/G\xx\cC$, where $G\sb\SL_2(\cC)$ is a finite subgroup.

\subsection{McKay quiver}
Let $G$ be a finite group and let $V$ be a finite-dimensional $G$-representation over \cC. We define the McKay quiver $Q$ of the pair $(G,V)$ as follows. The set of vertices is given by the set $\hat G$ of irreducible representations of $G$. For any $\si,\rho\in\hat G$ the set of arrows from \si to \rho is given by a basis of
$$\Hom_G(\si,\rho\ts V).$$

Let $S=\cC[V]=SV^*$ be a symmetric algebra over $V^*$. Then any module over the skew group algebra $S\rtimes G$ induces a representation of the McKay quiver $Q$, see \eg \cite[Section 5.1]{mozgovoy_crepant}. More precisely, there is a full and faithful functor
$$\mod (S\rtimes G)\to\mod\cC Q.$$

\subsection{Classical Mckay correspondence}
\label{sec:classical}
Let $G$ be a finite subgroup of $\SL_2(\cC)$. We will denote $\cC^2$ by $V$. Let $X=V/G$ and let $f:Y\to X$ be its crepant resolution, where $Y=\Hilb^G(V)$ parametrizes $G$-invariant quotients of $\lO_V$ isomorphic to the regular $G$-representation $\cC G$. We define $S=\cC[V]=SV^*$ and $R=\cC[X]=S^G$.
Let $p_Y:Y\xx V\to Y$, $p_V:Y\xx V\to V$ be projections.


Let $Z\sb Y\xx V$ be the tautological scheme and let $\lP'=p_{Y*}\lO_Z$ be the universal bundle on $Y=\Hilb^G(V)$. This is a bundle of $G$-representations isomorphic to the regular representation $\cC G$. We can decompose $\lP'=\bop_{\rho\in\hat G}\lP_\rho\ts\rho$, where $\rk\lP_\rho=\dim\rho$ (in the notation of Section \ref{sec:triples}, the index set $I$ is identified with $\hat G$ and $0\in I$ is identified with the trivial representation $\rho_0\in\hat G$).
The following result was proved in \cite{kapranov_kleinian}.

\begin{thr}
The functors
$$\Phi':D^b(\Coh Y)\to D^b(\Coh_GV),\qquad \Psi':D^b(\Coh_GV)\to D^b(\Coh Y)$$
defined by
\begin{align*}
\Phi'(F)=&Rp_{V*}R\lHom(\lO_Z,p_Y^!F),\\
\Psi'(F)=&(Rp_{Y*}(p_V^*F\lts\lO_Z))^G
\end{align*}
are mutually inverse equivalences of categories.
\end{thr}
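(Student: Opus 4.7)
My overall strategy is to apply the Bridgeland--King--Reid (BKR) framework for derived McKay equivalences, adapted to the $G\sb\SL_2(\cC)$ setting. The two functors $\Phi',\Psi'$ are Fourier--Mukai transforms with kernels built from $\lO_Z$, so I would first put them on an equal footing by rewriting them as integral transforms $\Phi'_{\lK}$ and $\Psi'_{\lK'}$ with explicit kernels $\lK,\lK'$ on $Y\xx V$. Using the flatness of $Z\to Y$ (it is a family of $G$-clusters of length $|G|$) and Grothendieck duality together with the Calabi--Yau/crepant property (so that $p_Y^!F\iso p_Y^*F[\dim V]$ up to shift/twist that matches the choice of $\omega_{Y/X}$), I would verify that $(\Psi',\Phi')$ is an adjoint pair up to shift. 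This reduces the problem to showing that $\Psi'$ is fully faithful and that its essential image is all of $D^b(\Coh Y)$.

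For full faithfulness I would use the spanning-class criterion: it suffices to prove that for any two closed points $y_1,y_2\in Y$ and any $i\in\cZ$,
\[
\Hom_{Y}\bigl(\Psi'(\lO_{y_1}),\Psi'(\lO_{y_2})[i]\bigr)\iso\Hom_{[V/G]}\bigl(\lO_{y_1},\lO_{y_2}[i]\bigr),
\]
which is what BKR calls the orthogonality condition. Here I would identify $\Psi'(\lO_y)$ with the structure sheaf of the $G$-cluster $Z_y\sb V$ (considered $G$-equivariantly), which is the fibre of the universal family at $y\in\Hilb^G(V)$. The key computation is then that two distinct $G$-clusters $Z_{y_1},Z_{y_2}\sb V$ satisfy $\Ext^i_G(\lO_{Z_{y_1}},\lO_{Z_{y_2}})=0$ for $i<0$ and $i>\dim V=2$, and that the natural map from $\Ext^i_Y$ is an isomorphism in the remaining degrees. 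This is where the hypothesis $G\sb\SL_2(\cC)$ enters crucially: the crepant condition forces the Serre functor on $D^b(\Coh_G V)$ restricted to clusters to match the Serre functor on $Y$ up to the same shift, and $Z$ has the correct dimension and regularity.

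Once full faithfulness is established, the equivalence follows by a standard argument: $\Psi'$ has a right adjoint $\Phi'$, and the counit $\Psi'\Phi'\to\id$ is an isomorphism on a spanning class of $D^b(\Coh Y)$ (the skyscrapers $\lO_y$, whose preimages under $\Phi'$ can be computed explicitly), hence on everything. Alternatively, since $\Psi'$ is fully faithful between two indecomposable categories both of which are Calabi--Yau of the same dimension (the Calabi--Yau property of $D^b(\Coh_G V)$ follows from $G\sb\SL_2$ and the triviality of $\omega_V$ as a $G$-sheaf), Bondal--Orlov gives that $\Psi'$ is an equivalence.

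I expect the main obstacle to be the orthogonality/Ext computation in the second step. Concretely, one must show that the sheaves $\lO_{Z_y}\in\Coh_G V$ are pairwise orthogonal in the appropriate derived sense; this requires a careful analysis of the local structure of $Z\sb Y\xx V$, using that $Y=\Hilb^G(V)$ represents the functor of $G$-clusters and that the tautological subscheme is a locally complete intersection of the expected codimension. All other steps (adjointness via duality, passage from full faithfulness to equivalence) are essentially formal once this Ext vanishing is in hand.
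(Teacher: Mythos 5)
The paper offers no proof of this theorem; it simply cites Kapranov--Vasserot's 1998 paper on Kleinian singularities, where the result was first established. Your outline instead runs through the Bridgeland--King--Reid (BKR) framework, which postdates Kapranov--Vasserot and gives a more systematic criterion for derived McKay equivalences; in dimension 2 the intersection condition of BKR is automatic, so this is a perfectly valid alternative route, arguably cleaner than the original ad hoc argument.

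Two small points to tighten. First, the adjointness $(\Psi',\Phi')$ here is exact, not ``up to shift'': the definition of $\Phi'$ already uses $p_Y^!$ rather than $p_Y^*$, and since $\omega_V\iso\lO_V$ as a $G$-sheaf (here $G\sb\SL_2$) one has $p_Y^!(-)=p_Y^*(-)[2]$, so the shift is built in. Indeed the paper's own remark after the theorem observes exactly this (and notes the shift was omitted in the original reference). Second, the Bondal--Orlov variant you sketch --- that a fully faithful functor between indecomposable Calabi--Yau categories of the same dimension is an equivalence --- cannot be invoked directly here, because neither $D^b(\Coh Y)$ nor $D^b(\Coh_G V)$ admits an absolute Serre functor ($Y$ and $V$ are non-proper). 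One must either pass to the compactly-supported/nilpotent subcategories, where a Serre functor does exist, or use a relative Serre duality over $X=V/G$; alternatively just stick with your first option (counit is an isomorphism on a spanning class), which avoids the issue and is the argument BKR themselves use in the quasi-projective setting. The core Ext-orthogonality computation you identify as the main obstacle is indeed where the real content lies, and in dimension 2 it is comparatively tractable since $f:Y\to X$ is the minimal resolution and $Z\sb Y\times V$ is a flat family of length-$|G|$ clusters.
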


\begin{rmr}
One can see from the definition of functors $\Phi'$ and $\Psi'$ that $\Phi'$ is right adjoint to $\Psi'$. Note that
$p_Y^!(-)=p_Y^*(-)\ts p^*_V\om_V[2]=p_Y^*(-)[2]$. This shift is missing in \cite{kapranov_kleinian}.
\end{rmr}

\begin{rmr}
\label{brav}
Let $V$ be an arbitrary finite-dimensional $G$-representation.
The category $\Coh_GV$ can be identified with the category $\mod A$ of left finitely-generated $A$-modules, where $A=\cC[V]\rtimes G$. Consider a $\cC G$-module $W=\bop_{\rho\in\hat G}\rho$ and an $A$-module $\wtl W=A\ts_{\cC G}W\iso\cC[V]\ts W$. Define
$$J=\End_A(\wtl W)\op\iso(\cC[V]\ts \End_{\cC}(W)\op)^G.$$
Then the functor
$$\Hom_A(\wtl W,-):\mod A\to \mod J$$
is an equivalence of categories \cite[Theorem 2.1]{brav_projective}. Note that for any $A$-module $M$ we have $\Hom_A(\wtl W,M)\iso\Hom_{\cC G}(W,M)\iso\oplus_{\rho\in\hat G}M_\rho$, the last isomorphism being an isomorphism of vector spaces. In particular $J\iso\Hom_A(\wtl W,\wtl W)\iso \cC[V]\ts(\bop_{\rho\in\hat G}\cC)$ as a vector space.
Therefore $A\iso J$ if and only if $G$ is abelian.
\end{rmr}

\begin{rmr}
\label{rmr:Y and J}
Considering $\Phi'(F)$ as a complex over $A$ we obtain
\begin{multline*}
\Phi'(F)=\RHom_{Y\xx V}(\lO_Z,p_Y^!F))
=R\Ga(Y,Rp_{Y*}R\lHom_{Y\xx V}(\lO_Z,p_Y^!F))\\
=R\Ga(Y,R\lHom_Y({p_{Y*}}\lO_Z,F)=\RHom_Y(\lP',F).
\end{multline*}
In particular $\Phi'(\lP')=\RHom_Y(\lP',\lP')\iso\cC[V]\ts \cC G$ (see \cite[Prop.~1.5]{kapranov_kleinian}). Then for $\lP=\bop_{\rho\in\hat G}\lP_\rho$ we have $\Phi'(\lP)=\cC[V]\ts\bop_{\rho\in\hat G}\rho=\wtl W$. Therefore the functor
$$\Phi=\RHom_A(\wtl W,-)\circ\Psi':D^b(Y)\to D^b(\mod J)$$
is given by
$$\Phi(F)=\RHom_A(\Phi'(\lP),\Phi'(F))=\RHom_Y(\lP,F)$$
and we are in the situation discussed in section \ref{sec:perv}.
\end{rmr}

It is known that the McKay quiver of $(G,V)$ is a double quiver $\ub Q$ of an affine quiver $Q$. The algebra $A=\cC[V]\rtimes G$ is Morita-equivalent to the preprojective algebra $\Pi_Q$ of the quiver $Q$ \cite{crawley-boevey_noncommutative,lenzing_curve,reiten_two-dimensional} (recall that $\Pi_Q$ is a quotient of $\cC\ub Q$ by the relation $\sum_{a\in Q_1}(aa^*-a^*a)$). More precisely, the algebra $J$ considered in Remark \ref{brav} is isomorphic to $\Pi_Q$ by \cite[Remark 2.7]{brav_projective}.

Recall that $\Hilb^GV$ parametrizes $G$-invariant quotients of $\lO_V$ isomorphic to $\cC G$ as $G$-representations. Equivalently, it parametrizes pairs $(M,m)$, where $M$ is a $\cC[V]\rtimes G$-module isomorphic to $\cC G\iso\bop_{\rho\in\hat G}\rho^{\dim\rho}$ as a $G$-representation and $m\in M^G$ generates $M$. This implies that $\Hilb^GV$ can be identified with the moduli space of \ze-stable $\Pi_Q$-modules having dimension vector
\begin{equation}
\de=(\dim\rho)_{\rho\in\hat G},
\label{eq:de}
\end{equation}
with stability parameter $\ze$ given by
\begin{equation}
\ze_0=\sum_{\rho\in\hat G_*}\dim\rho=\rk\lP_*,\qquad \ze_i=-1,\ i\in\hat G_*,
\label{eq:ze}
\end{equation}
where $\hat G_*=\hat G\ms\set{\rho_0}$ and $\rho_0$ denotes the trivial representation.


It is shown in \cite{gonzalez-sprinberg_construction} that $(c_1(\lP_\rho))_{\rho\in\hat G_*}$ is a basis of $\Pic(Y)=H^2(Y,\cZ)$ dual to the basis of $H_2(Y,\cZ)$ given by irreducible components of $f\inv(0)$. This is the content of the classical McKay correspondence -- the bijection between the irreducible components of $f\inv(0)$ and the non-trivial irreducible representations of $G$, or equivalently, the vertices of the McKay quiver. One denotes by $C_\rho$ the irreducible component of $f\inv(0)$ dual to $c_1(\lP_\rho)$, $\rho\in\hat G_*$.

%
%

\subsection{Dimension \texorpdfstring{$3$}{3} McKay correspondence}
\label{sec:dim3}
Let $G$ be a finite subgroup of $\SL_2(\cC)$ as before.
We can embed $G$ into $\SL_3(\cC)$ by letting $G$ act trivially on the third coordinate.
Let $X=V/G=(\cC^2/G)\xx\cC$, where $V=\cC^3$. It is proved in \cite[Theorem 1.2]{bridgeland_mukai}
that $Y=\Hilb^G(V)\iso\Hilb^G(\cC^2)\xx\cC$ is irreducible and the natural morphism $f:Y\to X$ is a crepant resolution. Here $\Hilb^G(V)$ parametrizes the $G$-invariant quotients of $\lO_V$ isomorphic to the regular $G$-representation $\cC G$.

The McKay quiver $\what Q$ of $(G,\cC^3)$ is obtained from $\ub Q$ (the McKay quiver of $(G,\cC^2)$) by adding loops $l_i:i\to i$ for every vertex $i\in Q_0$. By the classical McKay correspondence there is a bijection between $\hat G_*$ and irreducible components of $f\inv(0)$. Moreover, these irreducible components form a basis of $H_2(Y,\cZ)$.

Suppose that the quiver $Q$ mentioned earlier is an affine quiver of type $X_{l}^{(1)}$, $X=ADE$ ($l$ equals the number of elements in $\hat G_*$). Let \g be the corresponding affine Lie algebra. Its simple roots are in bijection with $\hat G$, \ie with irreducible representations of $G$.

%

As in Remark \ref{brav}, let $A=\cC[V]\rtimes G$, $W=\bop_{\rho\in\hat G}\rho$, and $\wtl W=A\ts_{\cC G}W$. The algebra $J=\End_A(\wtl W)\op$ is Morita equivalent to $A$. This algebra can be realized as a Jacobian algebra $J_{\what Q,W}$ of the McKay quiver $\what Q$ with a potential
\begin{equation}
W=\sum_{(a:i\to j)\in Q_1}(aa^*l_j-a^*al_i),
\label{eq:potential}
\end{equation}
where $a^*:j\to i$ is an arrow in $\ub Q$ dual to $a:i\to j$ and $l_i$ is a loop at vertex $i$. Then $\Hilb^G(V)$ can be identified with the moduli space $M_\ze(J,\de)$ of representations of $J$ having dimension vector \de from \eqref{eq:de} and semistable with respect to the stability parameter \ze from \eqref{eq:ze}.
Let \lP be the universal bundle on $M_\ze(J,\de)$. By the $3$-dimensional McKay correspondence proved in \cite{bridgeland_mukai,vandenbergh_three-dimensional} (to be more precise, in \cite{bridgeland_mukai} one constructs equivalence between $D^b(Y)$ and $D^b(\mod A)$ which can be interpreted as an equivalence between $D^b(Y)$ and $D^b(\mod J)$ similarly to Remark \ref{rmr:Y and J}) there is an equivalence of categories
$$\Phi:D^b(Y)\to D^b(\mod J),\qquad F\mto\RHom_Y(\lP,F).$$
As we have seen in Section \ref{DT/PT as reps}, in order to compute the DT/PT invariants of $Y$, we have to compute invariants of the moduli stacks of representations of some algebra $J'$ related to $J$. This will be done in the next section. 

\begin{rmr}
\label{special w}
Note that the algebra $J'$ defined in Section \ref{sec:triples} is obtained from the Jacobian algebra $J=J_{\what Q,W}$ by adding one new vertex $\infty$ to $\what Q$ and one new arrow $\infty\to0$, and considering $W$ as a potential for this new quiver. This coincides with the construction of $J'$ discussed in Section \ref{sec:framed} with the vector $w\in\cN^{Q_0}$ given by $w_0=1$ and $w_i=0$ for $i\ne0$.
\end{rmr}

\section{Loop double quivers and their DT invariants}
\label{sec:loop}
In the previous section, given a quiver $Q$ of affine type corresponding to a finite group $G\sb\SL_2(\cC)$, we have constructed a new quiver $\what Q$ with a potential and observed that we have to understand the moduli spaces of representations of the corresponding Jacobian algebra in order to understand Pandharipande-Thomas or Donaldson-Thomas invariants of $\Hilb^G(\cC^3)$. In this section we will start with an arbitrary quiver $Q$, construct canonically a new quiver $\what Q$ with a potential, and study the Donaldson-Thomas invariants of the corresponding Jacobian algebra. Clearly we will get much more than we actually need for the McKay case. The general situation turns out to be of independent interest.

Let $Q$ be a quiver. Define a double quiver $\ub Q$ to be obtained from $Q$ by adding arrows $a^*:j\to i$ for every arrow $(a:i\to j)\in Q_1$. Define a loop double quiver $\what Q$ (\cf \cite[Section 4.2]{ginzburg_calabi-yau}) to be obtained from $\ub Q$ by adding loops $l_i:i\to i$ for every vertex $i\in Q_0$.
We have
$$\hi_{\what Q}(\al,\be)=\hi_Q(\al,\be)+\hi_Q(\be,\al)-2\al\cdot\be.$$
Consider the following potential on $\what Q$
$$W=\sum_{(a:i\to j)\in Q_1}(aa^*l_j-a^*al_i).$$
There exists a cut $I\sb\what Q_1$ of $(\what Q,W)$ consisting of loops $l_i$, $i\in Q_0$.

Let $A_U$ be the Donaldson-Thomas series of $(\what Q,W)$ and let $\Om_\al\in\lvc$, $\al\in\cN^{Q_0}$, be its Donaldson-Thomas invariants defined in \eqref{eq:DT}
(note that $\what Q$ is a symmetric quiver).

For any $\al\in\cN^{Q_0}$ let $a_\al(q)$ be the polynomial counting absolutely indecomposable representations of $Q$ of dimension \al over finite fields \cite{kac_root}. It was proved by Kac \cite{kac_root} that this polynomial has integer coefficients and it was conjectured in \cite{kac_root} that it has non-negative coefficients. Let $\Pi_Q$ be the preprojective algebra of the quiver $Q$.

\begin{thr}
\label{u motive}
We have
$$A_U
=\sum_{\al\in\cN^{Q_0}}\cL^{\hi_{Q}(\al,\al)}\frac{[R(\Pi_Q,\al)]}{[\GL_\al]}y^\al
=\Exp\left(\frac{\sum_{\al\in\cN^{Q_0}}a_\al(\cL) y^\al}{1-\cL\inv}\right).$$
Motivic DT invariants (see \eqref{eq:DT}) are given by $\Om_\al=a_\al(\cL)$, $\al\in\cN^{Q_0}$.
\end{thr}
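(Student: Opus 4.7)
The plan is to prove the two equalities separately, with the first reducing the moduli-theoretic problem on $(\what Q, W)$ to one about the preprojective algebra $\Pi_Q$, and the second invoking a (motivic) Hua-type formula.

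\textbf{Step 1: First equality via dimensional reduction.} The key observation is that $I=\{l_i : i\in Q_0\}\sb\what Q_1$ is a cut of $(\what Q,W)$: each monomial of $W$ contains exactly one loop. I then apply Proposition \ref{prp:reduction} to the cut $I$. The partial derivative $\dd_{l_i}W=\sum_{a:j\to i}aa^*-\sum_{a:i\to j}a^*a$ is precisely the $i$-th preprojective relation, so $J_{W,I}=\cC\ub Q/(\dd_{l_i}W)_{i\in Q_0}=\Pi_Q$. A direct count gives $d_I(\al)=\sum_{i\in Q_0}\al_i^2$. Using the relation $\hi_{\what Q}(\al,\be)=\hi_Q(\al,\be)+\hi_Q(\be,\al)-2\al\cdot\be$ noted earlier in the section (applied with $\al=\be$), we get
$$\hi_{\what Q}(\al,\al)+2d_I(\al)=2\hi_Q(\al,\al)-2\textstyle\sum_i\al_i^2+2\sum_i\al_i^2=2\hi_Q(\al,\al).$$
Therefore Proposition \ref{prp:reduction} gives
$$[\gM(J_{\what Q,W},\al)]_\vir=(-\cL^\oh)^{2\hi_Q(\al,\al)}\frac{[R(\Pi_Q,\al)]}{[\GL_\al]}=\cL^{\hi_Q(\al,\al)}\frac{[R(\Pi_Q,\al)]}{[\GL_\al]},$$
and summing over $\al$ establishes the first equality.

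\textbf{Step 2: Second equality via a motivic Hua formula.} At the level of counting points over finite fields, Hua's formula expresses $\sum_\al q^{-\hi_Q(\al,\al)}\frac{|R(Q,\al)(\cF_q)|}{|\GL_\al(\cF_q)|}y^\al$ as a plethystic exponential of the generating series of Kac polynomials $a_\al(q)$. Combining this with the known identity (due to Crawley-Boevey--Van den Bergh and Mozgovoy) relating point counts of representation varieties of $\Pi_Q$ to those of $Q$, one obtains an identity of the form
$$\sum_\al q^{\hi_Q(\al,\al)}\frac{|R(\Pi_Q,\al)(\cF_q)|}{|\GL_\al(\cF_q)|}y^\al=\Exp\!\left(\frac{\sum_\al a_\al(q)y^\al}{1-q\inv}\right).$$
What I need is the motivic upgrade of this identity, which has been established by the author in \cite{mozgovoy_motivic} (and which the theorem is explicitly designed to package). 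Applying it directly, with $q\mapsto\cL$ and $\sigma$-operations on $\lvc$ extending the obvious $\psi_n$-action on $\cQ\lser{q}$, converts the sum obtained in Step 1 into the desired plethystic exponential, and then comparing with the definition \eqref{eq:DT} of $\Om_\al$ identifies $\Om_\al=a_\al(\cL)$.

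\textbf{Main obstacle.} Step 1 is essentially bookkeeping with characters and cuts, and should go through routinely. The whole content of the theorem sits in Step 2: the motivic Hua formula. The subtle point is that the usual proof of Hua's formula over $\cF_q$ proceeds by stratifying $R(Q,\al)$ by isomorphism type of representation using Burnside/groupoid cardinality, and the motivic lift requires one to replace \lq\lq number of isomorphism classes\rq\rq by a suitable motivic class. This is exactly where Theorem \ref{aut} comes in, since controlling motives of automorphism groups of objects in $\mod\Pi_Q$ (a Krull--Schmidt $\cC$-category) is what allows the Burnside-style summation to be carried out motivically. Once one grants the motivic Hua formula from \cite{mozgovoy_motivic}, the rest is a direct comparison with the definition of $A_U$ and $\Om_\al$.
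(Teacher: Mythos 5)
Your Step 1 is exactly the paper's first step: the cut $I=\set{l_i}$, the identification $J_{W,I}=\Pi_Q$, $d_I(\al)=\sum_i\al_i^2$, and the exponent bookkeeping via $\hi_{\what Q}(\al,\al)+2d_I(\al)=2\hi_Q(\al,\al)$ all match Proposition \ref{prp:reduction} as used in the paper, and this part is fine. The problem is Step 2, which is where the entire content of the theorem lies, and which you do not prove: you appeal to a ``motivic Hua formula'' allegedly established in \cite{mozgovoy_motivic}. That reference is invoked in this paper only for partial progress on Kac positivity, and the motivic identity
$$\sum_{\al}\cL^{\hi_Q(\al,\al)}\frac{[R(\Pi_Q,\al)]}{[\GL_\al]}\,y^\al=\Exp\left(\frac{\sum_\al a_\al(\cL)y^\al}{1-\cL\inv}\right)$$
is precisely what Theorem \ref{u motive} asserts after Step 1; citing it as known (you even note the theorem is ``designed to package'' it) amounts to assuming the conclusion rather than proving it. What the paper actually does at this point is a second dimensional reduction: the forgetful map $R(\Pi_Q,\al)\to R(Q,\al)$ has fiber over a $Q$-representation $M$ identified with $\Ext^1(M,M)^*$ (Crawley-Boevey's Lemma 4.2 of \cite{crawley-boevey_noncommutative}); one then stratifies by isomorphism classes of $Q$-representations, so the contribution of $M=\oplus_{X\in\Ind}X^{m(X)}$ is $\cL^{\dim\Ext^1(M,M)}/[\Aut M]=\cL^{-\hi_Q(\al,\al)}/\prod_X(\cL\inv)_{m(X)}$ by Theorem \ref{aut} applied in the Krull--Schmidt category of finite-dimensional $\cC Q$-modules (not $\mod\Pi_Q$, as you write --- the automorphism groups that enter are those of $Q$-representations, since the stratification happens downstairs in $R(Q,\al)$).

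Beyond the misidentified category, your sketch omits the two inputs that make the motivic (as opposed to finite-field) statement work: first, that the indecomposable locus $\Ind_\al$ admits a decomposition into varieties so that $[\Ind_\al]$ is a well-defined motive equal to $a_\al(\cL)$; second, the mechanism that replaces the Burnside-style sum over isomorphism classes, namely the power-structure formula \eqref{power} together with Heine's identity \eqref{eq:heine}, which convert the sum over maps $m:\Ind\to\cN$ into $\prod_\al\Pow\bigl(\Exp\bigl(\frac{y^\al}{1-\cL\inv}\bigr),a_\al(\cL)\bigr)=\Exp\bigl(\frac{\sum_\al a_\al(\cL)y^\al}{1-\cL\inv}\bigr)$. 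Without carrying out (or correctly locating in the literature) this argument, the second equality and the identification $\Om_\al=a_\al(\cL)$ from \eqref{eq:DT} remain unproved; your proposal identifies roughly the right ingredients but leaves the decisive step as an unjustified citation.
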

\begin{proof}
For any $\al\in\cN^{Q_0}$, let $\Ind_\al$ be the set of indecomposable representations of $Q$ of dimension \al. It is known that $\Ind_\al$ can be decomposed (non-canonically) into a finite union of algebraic varieties. The motive of $\Ind_\al$ is well-defined and equals $a_\al(\cL)$.
Let $\Ind=\coprod_{\al\in\cN^{Q_0}}\Ind_\al.$

To prove the statement of the theorem, we are going to apply Proposition \ref{prp:reduction}. Let
$$I=\sets{l_i}{i\in Q_0}\sb \what Q_1.$$
Then $\what Q_I=(Q_0,\what Q_1\ms I)=\ub Q$. The quotient algebra
$$J_{W,I}=\what Q_I/(\dd_IW)=(\dd W/\dd a\mid a\in I)$$
is just the preprojective algebra $\Pi_Q$ of $Q$, as for any $i\in Q_0$ we have
$$\dd W/\dd l_i=\sum_{t(a)=i}aa^*-\sum_{s(a)=i}a^*a.$$
Note that
$$d_I(\al)=\sum_{(a:i\to j)\in I}\al_i\al_j=\sum_{i\in Q_0}\al_i^2.$$
Applying the first dimensional reduction (Proposition \ref{prp:reduction}) we obtain
\begin{equation}
A_U=\sum_{\al\in\cN^{Q_0}}(-\cL^\oh)^{\hi_{\what Q}(\al,\al)+2d_I(\al)}
\frac{[R(J_{W,I},\al)]}{[\GL_\al]}y^\al
=\sum_{\al\in\cN^{Q_0}}\cL^{\hi_{Q}(\al,\al)}
\frac{[R(\Pi_Q,\al)]}{[\GL_\al]}y^\al.
\label{eq:A_U1}
\end{equation}
Now we perform the second dimension reduction -- go from the algebra $\Pi_Q$ of homological dimension $2$ to the algebra $\cC Q$ of homological dimension $1$.
To do this we take a different cut $I'=\sets{a^*}{a\in Q_1}$ of $(\what Q,W)$ and consider the forgetful map from $R(J_{W,I},\al)$ to $R(\what Q_{I\cup I'},\al)$, where $\what Q_{I\cup I'}=(Q_0,\what Q_1\ms(I\cup I'))=Q$. Important point for us is that the fibers of this map are vector spaces and we can determine their dimensions. More precisely, we consider the forgetful map
$$R(\Pi_Q,\al)\to R(Q,\al).$$
By \cite[Lemma 4.2]{crawley-boevey_noncommutative} its fiber over a $Q$-representation $M$ can be identified with $\Ext^1(M,M)^*$. Let $M=\oplus_{X\in\Ind}X^{m(X)}$, where $m:\Ind\to\cN$ is a map with finite support. The contribution of the isomorphism class of $M$ to $\frac{[R(\Pi_Q,\al)]}{[\GL_\al]}$ is given by (see Theorem \ref{aut})
\begin{equation}
\frac{\cL^{\dim\Ext^1(M,M)}}{[\Aut M]}=
\frac{\cL^{\dim\Ext^1(M,M)-\dim\Hom(M,M)}}{\prod_{X\in\Ind} (\cL\inv)_{m(X)}}=
\frac{\cL^{-\hi_Q(M,M)}}{\prod_{X\in\Ind} (\cL\inv)_{m(X)}}.
\label{eq:contrib}
\end{equation}
It follows from \eqref{eq:A_U1} and \eqref{eq:contrib} that
\begin{align*}
A_U
&=\sum_{m:\Ind\to\cN}\prod_{X\in\Ind}\frac{y^{m(X)\dim X}}{(\cL\inv)_{m(X)}}\\
&=\prod_{\al\in\cN^{Q_0}}\sum_{m:\Ind_\al\to\cN}\prod_{X\in\Ind_\al}\frac{y^{m(X)\al}}{(\cL\inv)_{m(X)}}\\
&=\prod_{\al\in\cN^{Q_0}}\Pow\left(\sum_{m\ge0}\frac{y^{m\al}}{(\cL\inv)_m},[\Ind_\al]\right)
&\text{(Equation \eqref{power})}\\
&=\prod_{\al\in\cN^{Q_0}}\Pow\left(\Exp\left(\frac{y^\al}{1-\cL\inv}\right),a_\al(\cL)\right)
&\text{(Equation \eqref{eq:heine})}\\
&=\prod_{\al\in\cN^{Q_0}}\Exp\left(\frac{a_\al(\cL)y^\al}{1-\cL\inv}\right)\\
&=\Exp\left(\frac{\sum_{\al\in\cN^{Q_0}}a_\al(\cL)y^{\al}}{1-\cL\inv}
\right).
\end{align*}
\end{proof}

\begin{rmr}
Let $Q$ be a quiver with one vertex and one loop. In this case $R(\Pi_Q,n)$, for $n\ge1$, consists of pairs of commuting $n\xx n$ matrices. The polynomials $a_n(q)$ are given by $a_n(q)=q$, $n\ge1$, and $a_0(q)=0$. The above theorem says that 
$$\sum_{n\ge0}\frac{[R(\Pi_Q,n)]}{[\GL_n]}y^n
=\Exp\left(\frac{\cL}{1-\cL\inv}\sum_{n\ge1}y^n\right).$$
This is a classical result of Feit and Fine \cite{feit_pairs} which was used in \cite{behrend_motivic} to compute the motivic DT invariants of $\cC^3$.
\end{rmr}

\begin{rmr}
Non-negativity of the coefficients of the Donaldson-Thomas invariants is equivalent to the Kac positivity conjecture, which states that the coefficients of the polynomials $a_\al$ are non-negative.
\end{rmr}

\begin{conj}
\label{conj:nonneg}
Let $(Q,W)$ be a symmetric quiver with potential. Assume that $(Q,W)$ admits two cuts $I,I'$ such that $I\cap I'=\emptyset$. Then the Donaldson-Thomas invariants $\Om_\al$, $\al\in\cN^{Q_0}$, are polynomials in $\cL^{\pm\oh}$ and $\Om_\al(-\cL^\oh)$ are polynomials with integer, non-negative coefficients.
\end{conj}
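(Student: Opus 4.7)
The plan is to mimic, in this more general setting, the double dimensional reduction used in the proof of Theorem~\ref{u motive}. First I would apply Proposition~\ref{prp:reduction} with the cut $I$ to obtain
\[
A_U=\sum_{\al\in\cN^{Q_0}}(-\cL^\oh)^{\hi(\al,\al)+2d_I(\al)}\frac{[R(J_{W,I},\al)]}{[\GL_\al]}y^\al.
\]
The hypothesis $I\cap I'=\emptyset$ together with the cut condition on both $I$ and $I'$ forces each monomial of $W$ to contain exactly one $I$-arrow and exactly one $I'$-arrow (plus possibly arrows from $Q_1\ms(I\cup I')$). Consequently $\dd W/\dd a$ is \emph{linear} in the $I'$-arrows for every $a\in I$, and the forgetful map $R(J_{W,I},\al)\to R(Q_{I\cup I'},\al)$ becomes an affine fibration whose fiber over $N$ is the solution set of a linear system whose coefficients depend only on the isomorphism class of $N$.

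Next I would stratify $R(Q_{I\cup I'},\al)$ by isomorphism class, invoke the Krull--Schmidt decomposition of $\mod\cC Q_{I\cup I'}$, and use Theorem~\ref{aut} to rewrite each orbit contribution in the style of~\eqref{eq:contrib}. Combining the resulting $q$-Pochhammer denominators via the power structure formula~\eqref{power} and the Heine identity~\eqref{eq:heine}, exactly as at the end of the proof of Theorem~\ref{u motive}, one expects an identity
\[
A_U=\Exp\Bigl(\frac{\sum_\al b_\al(\cL)y^\al}{1-\cL\inv}\Bigr),
\]
so that $\Om_\al=b_\al(\cL)$ is a polynomial in $\cL^{\pm\oh}$. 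The motive $b_\al(\cL)$ should admit the interpretation of counting, in families, indecomposable $Q_{I\cup I'}$-representations endowed with a compatible solution of the $I$-relations, twisted by a half-integer power of $\cL$ coming from the fiber-dimension calculation. This already yields the polynomiality part of the conjecture.

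The principal obstacle is the positivity of the coefficients of $b_\al(-\cL^\oh)$. In the loop-double-quiver case of Theorem~\ref{u motive}, $b_\al$ reduces to Kac's polynomial $a_\al$ and positivity specializes to the Kac positivity conjecture, which is still open; any proof of the present conjecture must therefore be at least as strong. A natural strategy is to specialize $\cL\mto q$ a prime power, reinterpret $b_\al(q)$ via a Burnside-type argument as the number of absolutely indecomposable objects of an appropriate $\cF_q$-linear Krull--Schmidt category of $Q_{I\cup I'}$-representations with a compatible $I$-relation datum, and then realize the putative non-negative integers as dimensions of graded pieces of a pure mixed Hodge structure, in the spirit of Efimov's cohomological proof~\cite{efimov_cohomological} of the trivial-potential case. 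The difficulty is that beyond the trivial-potential setting the underlying vanishing-cycle sheaf need not be pure, so one has to exploit the full strength of the two-cut hypothesis to produce either a second $\cC^*$-action (in addition to the one of Remark~\ref{rem:cut}) or a purity-restoring symmetry that isolates the positive-weight contribution.
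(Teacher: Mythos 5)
This is a \emph{conjecture} in the paper, not a proved theorem; the paper offers no proof of it, so there is no argument of the author's to compare yours against. You are right not to present your sketch as a proof, and you correctly locate the irreducible obstacle: in the loop-double-quiver case $(\what Q,W)$ your invariant $b_\al$ would specialize to Kac's $a_\al$, so a proof of Conjecture~\ref{conj:nonneg} would in particular reprove the Kac positivity conjecture, which the paper emphasizes is still open. Your overall framing --- first reduction via Proposition~\ref{prp:reduction} with the cut $I$, second reduction using the disjoint cut $I'$ to make the $I$-relations linear in the $I'$-arrows, then Krull--Schmidt stratification plus Theorem~\ref{aut}, \eqref{power} and \eqref{eq:heine} --- is exactly the mechanism the paper deploys for Theorem~\ref{u motive}, and it is the natural template for attacking the general conjecture.

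However, I would push back on one claim: you assert that the double reduction ``already yields the polynomiality part of the conjecture.'' It does not. What the stratification gives you is an expression for $\Om_\al$ as a (possibly infinite) alternating combination of motives of strata divided by motives of automorphism groups; the $q$-Pochhammer factors from Theorem~\ref{aut} conspire to cancel only because of the specific shape of the fibers. In the loop-double-quiver case the fiber over $M\in R(Q,\al)$ is literally $\Ext^1(M,M)^*$, and the identity $\dim\Ext^1(M,M)-\dim\Hom(M,M)=-\hi_Q(\al,\al)$ is what converts the sum over isomorphism classes into the clean $\Exp$-product, and even then polynomiality of $\Om_\al=a_\al(\cL)$ is Kac's theorem, not something that falls out of the manipulation. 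In your general setting the fiber over a $Q_{I\cup I'}$-representation $N$ is the kernel of a linear map whose rank varies with $N$, and there is no analogue of the $\hom-\ext^1$ identity; without one, the stratified sum need not collapse to a polynomial (or even to a finite expression in each degree) and you would at best land in $\wtl\lM_\cC$. So both halves of the conjecture --- polynomiality and positivity --- are genuinely open under your approach, and the paper's own framing (equivalence with Kac positivity in the special case, Efimov's theorem for $W=0$, Morrison--Mozgovoy--Nagao--Szendr\H{o}i for the conifold) is really a list of evidence, not a proof strategy.
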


In the case of a trivial potential this was conjectured by Kontsevich and Soibelman \cite{kontsevich_cohomological} and proved by Efimov \cite{efimov_cohomological}. As we have seen, in the case of $(\what Q,W)$ our conjecture is equivalent to the Kac positivity conjecture which was proved in many (but not all) cases \cite{crawley-boevey_absolutely,mozgovoy_motivic}. Our conjecture is also true for the quiver with potential for the conifold. This can be seen from the explicit formula \cite[Theorem 2.1]{morrison_motivica}.

\begin{rmr}
For any $\ze\in\cR^{Q_0}$ let $R_{\ze}(\Pi_Q,\al)\sb R(\Pi_Q,\al)$ be the space of \ze-semistable representations of $\Pi_Q$. Define the slope function $\mu_\ze(\al)=\frac{\ze\cdot\al}{\n\al}$ for $\al\in\cN^{Q_0}\ms\set0$, where $\n\al=\sum\al_i$. We can show, using the above theorem, that for any $\mu\in\cR$
$$\sum_{\mu_\ze(\al)=\mu}\cL^{\hi_{Q}(\al,\al)}\frac{[R_\ze(\Pi_Q,\al)]}{[\GL_\al]}y^\al
=\Exp\left(\frac{\sum_{\mu_\ze(\al)=\mu}a_\al(\cL) y^\al}{1-\cL\inv}\right).$$
For indivisible \al this implies
$$a_\al(\cL)=
\cL^{\hi_{Q}(\al,\al)-1}\frac{(\cL-1)[R_\ze(\Pi_Q,\al)]}{[\GL_\al]}.$$
The last equation was proved earlier in \cite[Prop.~2.2.1]{crawley-boevey_absolutely}.
\end{rmr}

\section{Motives for the McKay quiver}
\label{sec:applications}
Let $G\sb\SL_2(\cC)$ be a finite subgroup, $\what Q$ be the McKay quiver of $(G,\cC^3)$ as in Section \ref{sec:dim3}, $W$ be the potential for $\what Q$ from \eqref{eq:potential}, and let $Q$ be the corresponding affine quiver of type $X_l^{(1)}$ ($X=ADE$), where $l=\n{\hat G_*}$. Let $\De$ be the set of roots of the affine Lie algebra of type $X_l^{(1)}$ and let $\Deo$ be the set of roots of the finite Lie algebra of type $X_l$.
The vector
$$\de=(\dim\rho)_{\rho\in\hat G}$$
defined in \eqref{eq:de} is the indivisible imaginary root of $\De_+$.
Positive real roots in \De can be described as
$$\De_+^\re=\sets{\al+n\de}{\al\in\Deo,n\ge1}\cup\Deop.$$
We can decompose $\De_+^\re=\De_+^{\re,+}\cup\De_+^{\re,-}\cup
\Deop$, where
$$\De_+^{\re,+}=\sets{\al+n\de}{\al\in\Deop,n\ge1},\qquad
\De_+^{\re,-}=\sets{\al+n\de}{\al\in\Deom,n\ge1}.$$
Positive imaginary roots in \De can be described as
$$\De_+^\im=\sets{n\de}{n\ge1}.$$

\begin{thr}
The universal Donaldson-Thomas series of $(\what Q,W)$ equals
$$A_U=\Exp\left(\frac{\sum_{\al\in\De_+^\re}y^\al+(\cL+l)\sum_{\al\in\De_+^\im}y^\al}{1-\cL\inv}\right).$$
\end{thr}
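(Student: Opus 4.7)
The plan is to reduce the statement to a direct application of Theorem \ref{u motive}, which gives
$$A_U = \Exp\left(\frac{\sum_{\al\in\cN^{Q_0}}a_\al(\cL) y^\al}{1-\cL\inv}\right)$$
for any quiver $Q$. Since $Q$ is the affine quiver of type $X_l^{(1)}$, all the work is then shifted to identifying $a_\al(q)$ with the explicit polynomials predicted by Kac's classification of the positive roots of the associated Kac--Moody root system. So the plan reduces to substituting the known values of $a_\al$ for affine $Q$ into the exponential formula and matching it with the claimed shape.

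The key input is Kac's theorem, which gives three cases. First, $a_\al = 0$ whenever $\al$ is not a positive root of $\g$, so the sum in Theorem \ref{u motive} is supported on $\De_+ = \De_+^\re \cup \De_+^\im$. Second, for $\al \in \De_+^\re$ the quiver $Q$ admits a unique (up to isomorphism) indecomposable representation, which is absolutely indecomposable and rigid, so $a_\al(q) = 1$. Third, for $\al = n\de \in \De_+^\im$ the classical computation of absolutely indecomposable representations for tame (affine) quivers yields $a_{n\de}(q) = q + l$, with $l = \n{\hat G_*}$ equal to the number of vertices of the underlying finite Dynkin diagram of type $X_l$; this uses the tube classification of the regular part of $\mod \cC Q$ and is already announced in the introduction to this paper.

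Plugging these three cases into the formula of Theorem \ref{u motive} gives
$$A_U = \Exp\left(\frac{\sum_{\al\in\De_+^\re} 1\cdot y^\al + \sum_{n\ge1}(\cL+l)\, y^{n\de}}{1-\cL\inv}\right),$$
which, after rewriting $\sum_{n\ge1} y^{n\de} = \sum_{\al\in\De_+^\im} y^\al$, is exactly the asserted identity.

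The step that genuinely carries content is Theorem \ref{u motive} itself, which has already been proved; after that, this theorem is essentially a dictionary lookup. The only point requiring care is the imaginary-root contribution: one must ensure the correct normalization so that $a_{n\de}(q) = q + l$ holds for \emph{every} $n\ge1$ (independent of $n$), rather than only for $n=1$. This is the classical Kac calculation in the affine case and is where one should cite \cite{kac_root}; no further combinatorial work is needed.
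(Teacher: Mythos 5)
Your proposal is correct and follows essentially the same route as the paper: both reduce the statement to Theorem \ref{u motive} and then substitute the known values $a_\al(q)=1$ for $\al\in\De_+^\re$ and $a_{n\de}(q)=q+l$ for imaginary roots (the paper phrases the latter as the motive of indecomposables of dimension $n\de$ being $\cL+\dim\g_{n\de}$ with $\dim\g_{n\de}=l$). Your extra care about the $n$-independence of $a_{n\de}$ via the tube classification is exactly the point the paper settles by citing \cite{kac_infinite-dimensional}, so there is no substantive difference.
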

\begin{proof}
Let \g be the affine Lie algebra of type $X_l^{(1)}$.
The motive of indecomposable representations of $Q$ of dimension $n\de$ equals
$\cL+\dim\g_{n\de}$. It is known (see \eg \cite{kac_infinite-dimensional})
that $\dim\g_{n\de}=l$.
Now we apply Theorem \ref{u motive}.
\rem{For example for the Jordan quiver $A_0^{(1)}$ we get motive \cL}
\end{proof}

According to Corollary \ref{crl:Z factoriz}, for any generic $\ze\in\cR^{Q_0}$, the generating function of virtual motives of \ze-stable framed objects is
$$\lZ_\ze=S_{-w}\Exp\left(\sum_{\ze\cdot\al<0}\frac{\cL^{w\cdot\al}-1}{1-\cL\inv}\Om_\al y^\al\right).$$
where
$w\in\cN^{Q_0}$ is given as in Remark \ref{special w} by $w_0=1$ and $w_i=0$, $i\ne0$ and $\Om_\al=1$ for $\al\in\De_+^\re$ and $\Om_\al=\cL+l$ for $\al\in\De_+^\im$.

\begin{crl}
\label{crl:factorization}
We have
$$\lZ_\ze=\prod_{\ze\cdot\al<0}\lZ_\al$$
where
\begin{equation*}
\lZ_\al(-y_0,y_1,\dots)=\case{
\prod_{j=1}^{\al_0}(1-\cL^{j-\frac{\al_0}2}y^\al)\inv
&\al\in\De_+^\re
\\
\prod_{j=1}^{\al_0}(1-\cL^{j+1-\frac{\al_0}2}y^\al)\inv(1-\cL^{j-\frac{\al_0}2}y^\al)^{-l}
&\al\in\De_+^\im
\\
1&\text{otherwise}}
\end{equation*}
Note that $\lZ_\al=1$ for $\al\in\Deop$. 
\end{crl}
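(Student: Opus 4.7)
The plan is to start from the formula
$$\lZ_\ze = S_{-w}\Exp\!\left(\sum_{\ze\cdot\al<0}\frac{\cL^{w\cdot\al}-1}{1-\cL\inv}\,\Om_\al\, y^\al\right)$$
recorded in the remark preceding the corollary. Here $w\cdot\al=\al_0$ by the choice of $w$, and the theorem just proved for $(\what Q,W)$ supplies $\Om_\al=1$ for $\al\in\De_+^\re$, $\Om_\al=\cL+l$ for $\al\in\De_+^\im$, and $\Om_\al=0$ otherwise. Since $\what Q$ is symmetric the quantum torus $\qt$ is commutative, so $\Exp$ sends the outer sum into a product, and the ring homomorphism $S_{-w}$ respects this product. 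Defining $\lZ_\al$ to be the factor indexed by $\al$, one reads off $\lZ_\ze=\prod_{\ze\cdot\al<0}\lZ_\al$ and reduces the statement to evaluating each $\lZ_\al$.

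The elementary identity
$$\frac{\cL^{\al_0}-1}{1-\cL\inv}=\sum_{j=1}^{\al_0}\cL^j$$
reduces each exponent to a finite sum. Combined with $\Exp(\cL^j y^\al)=(1-\cL^j y^\al)^{-1}$, which follows from $\psi_n(\cL^j y^\al)=\cL^{jn}y^{n\al}$ in the commutative torus, the real-root case $\Om_\al=1$ gives
$$\Exp\!\left(\frac{\cL^{\al_0}-1}{1-\cL\inv}\,y^\al\right)=\prod_{j=1}^{\al_0}(1-\cL^j y^\al)^{-1}.$$
For $\al=n\de\in\De_+^\im$ we split the factor $\cL+l$ using $\Exp(a+b)=\Exp(a)\Exp(b)$ and $\Exp(l\,g)=\Exp(g)^l$ (valid since $l\in\cN$), obtaining
$$\prod_{j=1}^{\al_0}(1-\cL^{j+1}y^\al)^{-1}(1-\cL^j y^\al)^{-l}.$$

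Finally $S_{-w}$ acts on monomials by $y^\al\mapsto(-\cL^\oh)^{-\al_0}y^\al$, and the substitution $y_0\mapsto -y_0$ multiplies $y^\al$ by a further $(-1)^{\al_0}$, so their combined effect is $y^\al\mapsto\cL^{-\al_0/2}y^\al$. Applying this to every factor $(1-\cL^k y^\al)^{-m}$ replaces it by $(1-\cL^{k-\al_0/2}y^\al)^{-m}$, which yields the two displayed formulas. In the remaining cases $\Om_\al=0$, and in particular for $\al\in\Deop$ one has $\al_0=0$, so the empty product gives $\lZ_\al=1$. There is no substantive obstacle: the corollary is a direct unpacking of $\Exp$ and the twists $S_{-w}$, $y_0\mapsto -y_0$, once the $q$-binomial-type identity $\Exp((\cL^n-1)y/(1-\cL\inv))=\prod_{j=1}^n(1-\cL^j y)^{-1}$ is in hand.
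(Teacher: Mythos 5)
Your proposal is correct and follows essentially the same route as the paper: it starts from Corollary \ref{crl:Z factoriz} with $\Om_\al=1$ on $\De_+^\re$ and $\Om_\al=\cL+l$ on $\De_+^\im$, factors the $\Exp$ over the summands, evaluates each factor via $\frac{\cL^{\al_0}-1}{1-\cL\inv}=\sum_{j=1}^{\al_0}\cL^j$, and then applies the twist. Your explicit bookkeeping that $S_{-w}$ contributes $(-\cL^\oh)^{-\al_0}$ and the substitution $y_0\mapsto-y_0$ contributes $(-1)^{\al_0}$, combining to $\cL^{-\al_0/2}$, is a slightly more careful rendering of the step the paper states tersely.
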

\begin{proof}
We have
$$\lZ_\ze=\prod_{\ze\cdot\al<0} S_{-w}\Exp\left(\frac{\cL^{w\cdot\al}-1}{1-\cL\inv}\Om_\al y^\al\right).$$
Note that $w\cdot\al=\al_0$ and therefore $S_{-w}$ acts as $y^\al\mto\cL^{-\frac{\al_0}2}y^\al$. For $\Om_\al=1$ or $\Om_\al=\cL+l$ we have
$$\Exp\left(\frac{\cL^{\al_0}-1}{1-\cL\inv}y^\al\right)
=\prod_{j=1}^{\al_0}(1-\cL^jy^\al)\inv,$$
$$\Exp\left(\frac{(\cL^{\al_0}-1)(\cL+l)}{1-\cL\inv}y^\al\right)
=\prod_{j=1}^{\al_0}(1-\cL^{j+1}y^\al)\inv(1-\cL^jy^\al)^{-l}.$$
Now we apply $S_{-w}$ to both equations.
\end{proof}

Define $r=\sum_{\rho\in\hat G_*}\dim\rho=\n\de-1$ and consider the following stability parameters
$$\ze=(-r,1,\dots,1)\in\cR^{Q_0},\qquad \ze^{\pm}=(-r\pm\eps,1,\dots,1)\in\cR^{Q_0}.$$
We will often write $\lZ_{PT}(s,Q)$ and $\lZ_{DT}(s,Q)$ for $\lZ_{PT}(Y,s,Q)$ and $\lZ_{DT}(Y,s,Q)$.
It follows from \eqref{eq:gDT}, \eqref{eq:PT} that
\begin{equation}
\lZ_{PT}(-s,Q)=\lZ_{\ze^+}(-y_0,y_1,\dots),\qquad \lZ_{DT}(-s,Q)=\lZ_{\ze^-}(-y_0,y_1,\dots),
\end{equation}
where
\begin{equation}
s=\prod_{i\in I}y_i^{\rk\lP_i}=y^\de,\qquad Q^\be=\prod_{i\in I}y_i^{-c_1(\lP_i)\cdot\be}=y^{-\be}.
\end{equation}

\begin{lmm}
We have $\sets{\al\in\De_+}{\ze^\pm\cdot\al=0}=\emptyset$ and
$$\sets{\al\in\De_+}{\ze^+\cdot\al<0}=\De^{\re,-}_+\qquad \sets{\al\in\De_+}{\ze^-\cdot\al<0}=\De^{\re,-}_+\cup\De^\im_+.$$
\end{lmm}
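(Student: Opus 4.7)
The plan is to compute $\ze^\pm\cdot\al$ explicitly for every positive root and read off the signs. Writing $r=\n\de-1$ and splitting the sum $\ze^\pm\cdot\al=(-r\pm\eps)\al_0+\sum_{i\in\hat G_*}\al_i$ into contributions from vertex $0$ and the remaining vertices, we obtain the convenient formula
$$\ze^\pm\cdot\al=\n\al-\n\de\cdot\al_0\pm\eps\al_0.$$
The key observation, which I would record as a preliminary remark, is that $\de_0=\dim\rho_0=1$ while every root $\be\in\Deo$ of the finite subalgebra $X_l$ has $\be_0=0$. Hence, for $\al=\be+n\de$ with $\be\in\Deo$ and $n\ge0$, the $0$-component is simply $\al_0=n$.

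Next I would split $\De_+$ into the three families produced by the affine root-system structure recalled at the start of the section, and evaluate the formula on each.

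First, if $\al\in\Deop$, then $\al_0=0$, so $\ze^\pm\cdot\al=\n\al>0$; this eliminates these roots from both sets under consideration. Second, for $\al=\be+n\de$ with $\be\in\Deo$, $n\ge1$, we have $\al_0=n$, and the formula reduces to
$$\ze^\pm\cdot\al=\n\be+n\n\de-n\n\de\pm n\eps=\n\be\pm n\eps.$$
Since $\n\be\ne 0$ and $\eps>0$ is arbitrarily small, the sign of $\ze^\pm\cdot\al$ coincides with the sign of $\n\be$, so $\ze^\pm\cdot\al>0$ when $\be\in\Deop$ (i.e.\ $\al\in\De_+^{\re,+}$) and $\ze^\pm\cdot\al<0$ when $\be\in\Deom$ (i.e.\ $\al\in\De_+^{\re,-}$). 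Third, for $\al=n\de$ with $n\ge1$ we obtain
$$\ze^\pm\cdot\al=n\n\de-n\n\de\pm n\eps=\pm n\eps,$$
which is nonzero and has sign $\pm$.

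Assembling these three cases gives simultaneously $\ze^\pm\cdot\al\ne0$ for every $\al\in\De_+$, the equality $\{\al\in\De_+\mid\ze^+\cdot\al<0\}=\De_+^{\re,-}$, and the equality $\{\al\in\De_+\mid\ze^-\cdot\al<0\}=\De_+^{\re,-}\cup\De_+^\im$. There is no genuine obstacle; the only thing to be careful about is the convention $\de_0=1$ (equivalently $r=\n\de-1$), which ensures the cancellation $\n\de\cdot\al_0=n\n\de$ on real and imaginary multiples of $\de$ and thereby isolates the $\pm\eps$ perturbation as the term that decides the sign on $\De_+^\im$.
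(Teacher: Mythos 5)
Your proof is correct and takes essentially the same route as the paper: the paper merely cites the unperturbed facts $\{\al\in\De_+ : \ze\cdot\al=0\}=\De^\im_+$ and $\{\al\in\De_+ : \ze\cdot\al<0\}=\De^{\re,-}_+$ and lets the $\pm\eps\al_0$ term decide the sign on the imaginary roots, which is exactly the case-by-case computation (using $\de_0=1$, $\be_0=0$ for $\be\in\Deo$) that you write out explicitly. The one caveat, shared with the paper, is that ``sufficiently small $\eps$'' must be understood relative to $\al_0$ (the walls for the real roots $\be+n\de$ accumulate at $\ze$ as $n\to\infty$), so the sign statements hold for each fixed root, or on any bounded set of dimension vectors, rather than for one fixed $\eps$ uniformly over all of $\De_+$.
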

\begin{proof}
Our statement follows from the fact that
$$\sets{\al\in\De_+}{\ze\cdot\al=0}=\De^\im_+,\qquad
\sets{\al\in\De_+}{\ze\cdot\al<0}=\De^{\re,-}_+.$$
\end{proof}

\begin{crl}
\label{crl:PT/DT}
We have
\begin{align*}
\lZ_{PT}(-s,Q)
&=\prod_{n\ge1}\prod_{\al\in\Deo_+}\Exp\left(\frac{\cL^n-1}{\cL-1}\cL^{1-\frac n2}s^nQ^\al\right),\\
\lZ_{DT}(-s,Q)
&=\lZ_{PT}(-s,Q)\cdot
\prod_{n\ge1}\Exp\left(\frac{(\cL^n-1)(\cL+l)}{\cL-1}\cL^{1-\frac n2}s^n\right).
\end{align*}
\end{crl}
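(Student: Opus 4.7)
The plan is to combine the preceding lemma with Corollary~\ref{crl:factorization}. By that lemma both stability parameters $\ze^\pm$ are generic, with $\sets{\al\in\De_+}{\ze^+\cdot\al<0}=\De_+^{\re,-}$ and $\sets{\al\in\De_+}{\ze^-\cdot\al<0}=\De_+^{\re,-}\cup\De_+^\im$. Together with the identifications $\lZ_{PT}(-s,Q)=\lZ_{\ze^+}(-y_0,y_1,\dots)$ and $\lZ_{DT}(-s,Q)=\lZ_{\ze^-}(-y_0,y_1,\dots)$, and the fact that the factors $\lZ_\al$ indexed by $\al\in\Deop$ are trivial, Corollary~\ref{crl:factorization} gives
\[
\lZ_{PT}(-s,Q)=\prod_{\al\in\De_+^{\re,-}}\lZ_\al,\qquad\lZ_{DT}(-s,Q)=\lZ_{PT}(-s,Q)\cdot\prod_{\al\in\De_+^\im}\lZ_\al.
\]

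Next I parametrize the roots. Every $\al\in\De_+^{\re,-}$ has a unique expression $\al=-\alpha+n\de$ with $\alpha\in\Deo_+$ and $n\ge 1$; since $\de_0=1$ and every element of $\Deo$ has vanishing $0$-component, this yields $\al_0=n$ and, under the substitutions $s=y^\de$, $Q^\be=y^{-\be}$, the equality $y^\al=s^nQ^\alpha$. Similarly every $\al\in\De_+^\im$ is $n\de$ with $\al_0=n$ and $y^\al=s^n$. The remaining ingredient is the \la-ring identity $(1-\cL^az)^{-1}=\Exp(\cL^az)$ valid for a single monomial $z=y^\mu$ (which follows from $\psi_k(\cL^az)=\cL^{ka}z^k$); it turns each finite product appearing in Corollary~\ref{crl:factorization} into a single $\Exp$ after summing a geometric series.

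For a real-root factor with $\al_0=n$ the computation $\sum_{j=1}^n\cL^{j-n/2}=\cL^{1-n/2}(\cL^n-1)/(\cL-1)$ yields
\[
\lZ_\al=\Exp\!\left(\tfrac{\cL^n-1}{\cL-1}\,\cL^{1-n/2}s^nQ^\alpha\right),
\]
and taking the product over $(\alpha,n)$ reproduces the PT formula. For an imaginary root $\al=n\de$, the same manipulation applied to both factors of $\lZ_\al$, combined with the shifted sum $\sum_{j=1}^n\cL^{j+1-n/2}=\cL\cdot\cL^{1-n/2}(\cL^n-1)/(\cL-1)$ and $l$ copies of the unshifted sum, produces the single exponent $(\cL+l)\,\cL^{1-n/2}(\cL^n-1)/(\cL-1)\cdot s^n$, which is precisely the additional factor in $\lZ_{DT}/\lZ_{PT}$. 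The argument is essentially mechanical once the setup is in place; the only bookkeeping that could cause confusion is orienting the bijection $\De_+^{\re,-}\leftrightarrow\Deo_+\times\{n\ge 1\}$ so that the $Q$-exponent lies in $\Deo_+$ as in the statement, and remembering that the substitution $y_0\mapsto-y_0$ is already absorbed into the explicit formulas of Corollary~\ref{crl:factorization}.
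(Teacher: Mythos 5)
Your proof is correct and follows essentially the same route as the paper: combine the lemma identifying $\sets{\al\in\De_+}{\ze^\pm\cdot\al<0}$ with the factorization of $\lZ_{\ze^\pm}$, reparametrize $\De_+^{\re,-}$ as $-\al+n\de$ with $\al\in\Deo_+$ and use $s=y^\de$, $Q^\be=y^{-\be}$. The only cosmetic difference is that the paper works directly with the $\Exp$-form coming from Corollary \ref{crl:Z factoriz}, whereas you pass through the finite products of Corollary \ref{crl:factorization} and resum them via $\sum_{j=1}^n\cL^{j-n/2}$, which is the same computation run in reverse.
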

\begin{proof}
We have
\begin{multline*}
\lZ_{PT}(-s,Q)
=\lZ_{\ze^+}(-y_0,y_1,\dots)
=\prod_{\al\in\De_+^{\re,-}}
\Exp\left(\frac{\cL^{\al_0}-1}{\cL-1}\cL^{1-\frac{\al_0}2}y^\al\right)\\
=\prod_{n\ge1}\prod_{\al\in\Deo_-}\Exp\left(\frac{\cL^n-1}{\cL-1}\cL^{1-\frac n2}y^{\al+n\de}\right)
=\prod_{n\ge1}\prod_{\al\in\Deo_+}\Exp\left(\frac{\cL^n-1}{\cL-1}\cL^{1-\frac n2}s^nQ^\al\right).
\end{multline*}
The proof for $\lZ_{DT}$ is similar.
\end{proof}

\begin{crl}
\label{expl DT/PT}
We have
\begin{align*}
\lZ_{PT}(-s,Q)
&=\prod_{n\ge1}\prod_{j=1}^n\prod_{\al\in\Deo_+}(1-\cL^{j-\frac n2}s^nQ^\al)\inv,\\
\lZ_{DT}(-s,Q)
&=\lZ_{PT}(-s,Q)\cdot\prod_{n\ge1}\prod_{j=1}^{n}(1-\cL^{j+1-\frac{n}2}s^n)\inv(1-\cL^{j-\frac{n}2}s^n)^{-l}.
\end{align*}
\end{crl}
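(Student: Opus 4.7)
The plan is to deduce Corollary \ref{expl DT/PT} from Corollary \ref{crl:PT/DT} by pure manipulation of plethystic exponentials. The crucial ingredients are the $q$-integer identity
$$\frac{\cL^n-1}{\cL-1}=\sum_{j=0}^{n-1}\cL^j,$$
the multiplicativity of $\Exp$ (namely $\Exp(f+g)=\Exp(f)\Exp(g)$, which follows from the definition in terms of Adams operations), and the single-monomial evaluation $\Exp(\cL^k x)=(1-\cL^k x)\inv$, which we already used implicitly in \eqref{eq:heine}.

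First I would rewrite the argument of $\Exp$ in the PT formula of Corollary \ref{crl:PT/DT} as
$$\frac{\cL^n-1}{\cL-1}\cL^{1-\frac{n}{2}}s^n Q^\al=\sum_{j=0}^{n-1}\cL^{j+1-\frac{n}{2}}s^n Q^\al=\sum_{j=1}^{n}\cL^{j-\frac{n}{2}}s^n Q^\al,$$
by the index shift $j\mapsto j+1$. Applying multiplicativity of $\Exp$ and the geometric series identity term by term yields
$$\Exp\Big(\tfrac{\cL^n-1}{\cL-1}\cL^{1-\frac{n}{2}}s^n Q^\al\Big)=\prod_{j=1}^{n}\bigl(1-\cL^{j-\frac{n}{2}}s^n Q^\al\bigr)\inv,$$
and taking the product over $n\ge 1$ and $\al\in\Deo_+$ gives the asserted formula for $\lZ_{PT}(-s,Q)$.

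For $\lZ_{DT}(-s,Q)$ I would treat the extra factor in the same way. Write
$$\frac{(\cL^n-1)(\cL+l)}{\cL-1}\cL^{1-\frac{n}{2}}s^n=\sum_{j=1}^{n}\cL^{j-\frac{n}{2}}(\cL+l)s^n=\sum_{j=1}^{n}\Bigl(\cL^{j+1-\frac{n}{2}}s^n+l\cdot\cL^{j-\frac{n}{2}}s^n\Bigr).$$
Multiplicativity of $\Exp$ then splits this into
$$\prod_{j=1}^{n}\bigl(1-\cL^{j+1-\frac{n}{2}}s^n\bigr)\inv\bigl(1-\cL^{j-\frac{n}{2}}s^n\bigr)^{-l},$$
using that $\Exp(l\cdot f)=\Exp(f)^l$ for $l\in\cN$. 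Multiplying by the PT factor already computed and taking the product over $n\ge 1$ delivers the claimed formula.

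There is no real obstacle here: the statement is a routine unpacking of the plethystic exponentials from Corollary \ref{crl:PT/DT}. The only thing to be slightly careful about is the index shift in the geometric sum (so that the exponents of $\cL$ come out symmetric around $-n/2$, giving the clean form $\cL^{j-n/2}$ with $j=1,\dots,n$) and the fact that the factor $\cL+l$ distributes to produce one family of factors shifted by one power of $\cL$ and an $l$-fold family at the base level.
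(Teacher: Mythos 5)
Your proof is correct and coincides with the paper's intended argument: the paper states Corollary \ref{expl DT/PT} without a separate proof, treating it as an immediate consequence of Corollary \ref{crl:PT/DT}, and the manipulations you carry out (splitting the $q$-integer $\frac{\cL^n-1}{\cL-1}$, using additivity of $\Exp$ and $\Exp(\cL^k x)=(1-\cL^k x)\inv$) are precisely the ones the paper already performs in the proof of Corollary \ref{crl:factorization}.
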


\begin{rmr}
It was proved in \cite{behrend_motivic} that
$\lZ_{Y}(s)=\sum_{n\ge0}[\Hilb^n Y]_\vir s^n$ is given by
$$\lZ_Y(-s)=\Pow(\lZ_{\cC^3}(-s),\cL^{-3}[Y]),$$
where
$$\lZ_{\cC^3}(-s)=\prod_{n\ge1}\prod_{j=1}^n(1-\cL^{j+1-\frac n2}s^n)\inv
=\Exp\left(\sum_{n\ge1}\frac{\cL^n-1}{\cL-1}\cL^{2-\frac n2}s^n\right).$$
In our case $[Y]=\cL^3+l\cL^2$. Therefore
$$\lZ_Y(-s)
=\Exp\left((1+l\cL\inv)\sum_{n\ge1}\frac{\cL^n-1}{\cL-1}\cL^{2-\frac n2}s^n\right)
=\lZ_{DT}(-s,Q)/\lZ_{PT}(-s,Q).$$
The property $\lZ_{DT}(s,Q)=\lZ_{PT}(s,Q)\lZ_Y(s)$ is expected to be true for general $3$-Calabi-Yau manifolds \cite[Remark 4.4]{morrison_motivica}.
This property for numerical invariants, called DT/PT correspondence, was conjectured in \cite[Conj.3.3]{pandharipande_curve} and proved in \cite{bridgeland_hall,toda_curve}.
\end{rmr}

\subsection{Classical limit}
The classical limit $\ub\lZ_{PT}$ of $\lZ_{PT}$ (resp.\ $\ub\lZ_{DT}$ of $\lZ_{DT}$) is obtained by taking the Euler number specialization $\cL^\oh\mto1$.

Define the generalized MacMahon function
$$M(x,q)=\prod_{n\ge1}(1-xq^n)^{-n}=\Exp\bigg(\sum_{n\ge1}nxq^n\bigg)=\Exp\left(\frac{xq}{(1-q)^2}\right)$$
and $M(q)=M(1,q)=\prod_{n\ge1}(1-q^n)^{-n}$.

The following result was proved in \cite{gholampour_counting} using localization techniques (we use variable $q$ instead of variable $s$ used earlier for historical reasons).

\begin{crl}
We have
$$\ub\lZ_{PT}(-q,Q)=\prod_{\be\in\Deo_+}\prod_{n\ge1}(1-q^nQ^\be)^{-n}
=\prod_{\be\in\Deo_+}M(Q^\be,q),$$
$$\ub\lZ_{DT}(-q,Q)
=\ub\lZ_{PT}(-q,Q)\prod_{n\ge1}(1-q^n)^{-(l+1)n}
=\ub\lZ_{PT}(-q,Q)M(q)^{l+1}.$$
\end{crl}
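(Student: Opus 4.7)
The plan is to derive this corollary as a direct consequence of the explicit product formulas in Corollary \ref{expl DT/PT}, by applying the Euler number specialization $\cL^\oh \mto 1$ termwise to each factor in the infinite product.

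Concretely, I first observe that although the Euler specialization $e$ is not defined on all of $\lvc$, the formulas in Corollary \ref{expl DT/PT} are presented as convergent infinite products in the formal power series ring in $s$ and $Q$, whose individual factors $(1-\cL^{j-n/2}s^nQ^\al)^{-1}$ expand to series whose coefficients are Laurent polynomials in $\cL^{\pm\oh}$. On such expressions the specialization $\cL^\oh \mto 1$ is unambiguous, so I apply it factor by factor.

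For the PT series, each factor $(1-\cL^{j-n/2}s^nQ^\al)^{-1}$ specializes to $(1-s^nQ^\al)^{-1}$, which is independent of $j$. Collapsing the inner product over $j=1,\dots,n$ then produces $(1-q^nQ^\al)^{-n}$ after relabeling $s\to q$, and grouping by $\be\in\Deo_+$ yields
\[
\ub\lZ_{PT}(-q,Q)=\prod_{\be\in\Deo_+}\prod_{n\ge1}(1-q^nQ^\be)^{-n}=\prod_{\be\in\Deo_+}M(Q^\be,q),
\]
directly by the definition of $M(x,q)$. For the DT series the same reasoning specializes the extra factor $\prod_{n\ge1}\prod_{j=1}^n(1-\cL^{j+1-n/2}s^n)^{-1}(1-\cL^{j-n/2}s^n)^{-l}$ to $\prod_{n\ge1}(1-q^n)^{-n}(1-q^n)^{-ln}=\prod_{n\ge1}(1-q^n)^{-n(l+1)}=M(q)^{l+1}$, and combining with the PT formula completes the proof.

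There is no real obstacle here; the only minor subtlety is verifying that the specialization commutes with the infinite products, but this is immediate because each power of $s$ and $Q$ receives contributions from only finitely many factors, so the specialization can be performed coefficientwise. The substance of the corollary is entirely contained in Corollary \ref{expl DT/PT}; this statement merely records the degenerate but historically important form of those formulas.
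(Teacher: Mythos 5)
Your proof is correct and follows the same route as the paper, whose entire argument is simply to apply Corollary \ref{expl DT/PT} and specialize $\cL^\oh\mto1$. Your extra remarks on why the specialization is well defined factor by factor (each coefficient in $s$, $Q$ being a Laurent polynomial in $\cL^{\pm\oh}$) are a harmless elaboration of what the paper leaves implicit.
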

\begin{proof}
Just apply Corollary \ref{expl DT/PT}.
\end{proof}

\begin{rmr}
\label{rmr:NCDT formula}
According to Remark \ref{rmr:NCDT} and Corollary \ref{crl:factorization}, we can write the generating function of motivic NCDT invariants as $\lZ_{NCDT}=\prod_{\al\in\De_+}\lZ_\al$. Specialization of this formula at $\cL^\oh=1$ gives numerical NCDT invariants
$$\ub\lZ_{NCDT}(-q,Q)
=\prod_{\be\in\Deo}\prod_{n\ge1}(1-q^nQ^\be)^{-n}\cdot
\prod_{n\ge1}(1-q^n)^{-(l+1)n}
=M(q)^{l+1}\prod_{\be\in\Deo}M(Q^{\be},q).$$
This result was obtained earlier by Gholampour and Jiang \cite[Theorem 1.7]{gholampour_counting}. For abelian $G$ this result was proved by Young \cite[Theorem 1.4]{young_generating} using combinatorics and by Nagao \cite[Theorem 2.20]{nagao_derived} using wall-crossing formulas.
\end{rmr}

\subsection{Gopakumar-Vafa invariants}
It follows from the GW/DT/PT correspondence \cite[Conj.3.3]{pandharipande_curve} (see also \cite[Conj.3]{maulik_gromov-witten}) that
\begin{equation}
\lZ'_{GW}(Y,\la,Q)=\exp\left(\sum_{\be\ne0}\sum_{g\ge0}N_{g,\be}\la^{2g-2}Q^\be\right)=\ub \lZ_{PT}(Y,-q,Q),
\end{equation}
where $N_{g,\be}$ are the Gromov-Witten invariants of $Y$ and where we identify $q=e^{i\la}$. This determines the Gromov-Witten invariants. Their direct computation can be found in \cite{gholampour_counting}.
The Gopakumar-Vafa invariants $n_{g,\be}$ are determined by the formula~\cite{hosono_relative}
\begin{equation}
\lZ'_{GW}(Y,\la,Q)
=\exp\bigg(\sum_{\be\ne0}\sum_{g\ge0,k\ge1}\frac1k\Big(2\sin\frac{k\la}2\Big)^{2g-2}n_{g,\be}Q^{k\be}\bigg).
\end{equation}
Equivalently (under the GW/PT correspondence)
\begin{equation}
\ub\lZ_{PT}(-q,Q)=\Exp\bigg(\sum_{\be\ne0}\sum_{g\ge0}(2-q-q\inv)^{g-1}n_{g,\be} Q^{\be}\bigg).
\label{eq:}
\end{equation}
We have seen that
$$\ub\lZ_{PT}(-q,Q)=\prod_{\be\in\Deo_+} M(Q^\be,q)
=\prod_{\be\in\Deo_+} \Exp\left(\frac{qQ^\be}{(1-q)^2}\right)
=\Exp\left(\frac{\sum_{\be\in\Deo_+} Q^\be}{q+q\inv-2}\right).$$
This implies $n_{g,\be}=-1$ for $g=0$ and $\be\in\Deop$ and zero otherwise (see \cite[Cor.~1.6]{gholampour_counting}).

\bibliography{../tex/papers}
\bibliographystyle{../tex/hamsplain}

\end{document}